\documentclass{amsart}

\usepackage{marktext}
\usepackage{gimac}

\DeclareMathOperator{\Leb}{Leb}

\DeclarePairedDelimiter{\ip}{\langle}{\rangle}

\begin{document}

\title[Quantitative Mixing Rate of the Pierrehumbert Flow]
{Quantitative   Dependence of the Pierrehumbert Flow's Mixing Rate on the Amplitude}

\author[Son]{Seungjae Son}
\address{%
  Department of Mathematical Sciences, Carnegie Mellon University, Pittsburgh, PA 15213.
}
\email{seungjas@andrew.cmu.edu}

\begin{abstract}
  We quantitatively study the mixing rate of randomly shifted alternating shears on the torus.
  This flow was introduced by Pierrehumbert '94, and was recently shown to be exponentially mixing.
  In this work, we quantify the dependence of the exponential mixing rate on the flow amplitude.
  Our approach is based on constructing an explicit Lyapunov function and a coupling trajectory for the associated two-point Markov chain, together with an application of the quantitative Harris theorem.
\end{abstract}

\subjclass{%
  Primary:
    37A25. 
  Secondary:
    60J05, 
  }
\keywords{Harris Theorem, mixing}
\maketitle

\section{Introduction}

\subsection{Main Results}
We begin by stating the main result of this paper. Let $\T \cong \R/2\pi\Z$ and $(\zeta_n)_{n\geq 1}$ be a sequence of i.i.d random variables that are uniformly distributed on $\T$. Define a random velocity field $u:[0,\infty) \times \T^2 \to \R^2$ such that for each $n\in \N_0$,
\begin{equation}\label{e:udef}
    u(t,x) = 
\begin{cases}
A\sin(x_2-\zeta_{2n+1})e_1\,, & t\in [2n, 2n+1)\\
A\sin(x_1-\zeta_{2n+2})e_2\,, & t\in [2n+1, 2n+2)
\end{cases}\,,
\end{equation}
where $x=(x_1, x_2)\in \T^2$ and $e_i$ is the $i$-th standard basis vector. 

\begin{theorem}\label{t:main-theorem}
    Fix $q<\infty$. For all sufficiently large $A$, there exists a random $D_A>0$ such that every solution to the transport equation
    \begin{equation}\label{e:transport}
        \partial_t \phi + u\cdot \nabla \phi = 0\,,
    \end{equation}
    with mean-zero initial data $\phi_0 \in H^1(\T^2)$ satisfies
    \begin{equation}\label{e:mixing}
        \norm{\phi_n}_{H^{-1}} \leq D_A \exp(-\exp(-A^{96})n)\norm{\phi_0}_{H^1}\,, \forall n\in 2\N\,, \quad\text{a.s}\,.
    \end{equation}
    Moreover, there exists a finite deterministic constant $\bar D_q$, that is independent of $A$, such that 
    \begin{equation}\label{e:Dmoment-bound}
        \E[D_A^q] \leq \bar D_q\,.
    \end{equation}
    Here, $u$ is defined as in~\eqref{e:udef} and $\E$ is the expectation with respect to the probability measure where $(\zeta_n)_{n\geq 1}$ is defined.
\end{theorem}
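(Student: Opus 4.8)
The plan is to follow the two-point chain route to mixing. First I would reduce the bound~\eqref{e:mixing} to a quantitative geometric ergodicity estimate for the Markov chain $(x_n,y_n)$ obtained by evolving an ordered pair of points under the two-shear time-step built from~\eqref{e:udef}; since the alternating shears fix the coordinate directions $e_1,e_2$, it is natural to lift this to the \emph{projective} two-point chain $(x_n,y_n,s_n)$, where $s_n$ records the direction of $x_n-y_n$. Writing a solution of~\eqref{e:transport} as $\phi_n=\phi_0\circ\Phi_n^{-1}$ for the volume-preserving flow map $\Phi_n$, one has the a priori pointwise bound $\norm{\phi_n}_{H^{-1}}\le\norm{\phi_0}_{L^2}\le\norm{\phi_0}_{H^1}$ for every $n$, while $\E[\norm{\phi_n}_{H^{-1}}^2]$ is controlled by a correlation integral against the law of $(x_n,y_n)$ tested against $H^1$ functions. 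Thus a contraction of the weighted transfer operator $\mathcal P$ of the projective two-point chain yields decay of $\E[\norm{\phi_n}_{H^{-1}}^2]$ at the corresponding rate, and the almost-sure statement together with the moment bound~\eqref{e:Dmoment-bound} follows by a Borel--Cantelli/Doob argument applied to a suitable functional of the chain --- combined, crucially, with the pointwise bound above, so that the random constant $D_A$ is governed only by a small (uniformly in $A$, in every $L^q$) amount of randomness rather than by the Harris prefactors.

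The heart of the matter is to supply, \emph{with explicit dependence on $A$}, the two ingredients of the quantitative Harris theorem. For the drift I would exhibit a Lyapunov function of the form $V(x,y,s)\approx|x-y|^{-p}W(s)$, with $p=p(A)>0$ small and $W$ bounded above and below, and prove a geometric drift inequality $\mathcal P V\le\gamma_A V+K_A$ with $\gamma_A<1$. This rests on analyzing how one double-shear with uniform random phases acts on the separation $\delta=x-y$ and on its direction: for $|\delta|\ll A^{-1}$ the action is essentially linear, given by a product of two unipotent shear matrices with off-diagonal entries of size $\sim A$, which generically expands $|\delta|$ by $\sim A^2$; once $|\delta|$ reaches scale $A^{-1}$ the nonlinearity wraps and one coordinate of $\delta$ becomes essentially uniformly distributed, driving $|\delta|$ to order one. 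One must carefully account for the exceptional events --- shear gradients $A\cos(\cdot)$ atypically small, and $s$ near $e_1$ or $e_2$, where the chain moves slowly --- and take $p(A)$ small enough that the growth of $V$ on these events is outweighed; this trade-off is where the exponent degrades and is the main source of the $A^{96}$. For the minorization I would establish a small-set condition on the sublevel sets $\{V\le R_A\}$, that is, for pairs with $|x-y|\ge\epsilon_A$ one constructs the coupling trajectory promised in the abstract --- an explicit choice of the phases over a window of $m_A=O(\log(1/\epsilon_A)/\log A)$ double-steps that steers any two such configurations into a common state (equivalently, forces the two laws to overlap) --- and lower-bounds the probability $\alpha_A$ in $\mathcal P^{m_A}((x,y),\cdot)\ge\alpha_A\,\nu_A(\cdot)$ by tracking the implicit-function Jacobians, each a fixed power of $A$.

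Feeding the quadruple $(\gamma_A,K_A,\alpha_A,R_A)$ into the quantitative Harris theorem then produces a geometric ergodicity rate $\rho_A$ for $\mathcal P$ with explicit $A$-dependence, and one checks that, optimizing $p(A)$ and the coupling window within the scheme above, $\rho_A\ge\exp(-A^{96})$ for all sufficiently large $A$; substituting into the reduction of the first paragraph gives~\eqref{e:mixing}, and the same estimates, together with the a priori bound on $\norm{\phi_n}_{H^{-1}}$, give~\eqref{e:Dmoment-bound}. I expect the real difficulty to be twofold: (i) producing fully explicit constants in the drift and minorization, and in particular taming the slow behaviour of the two-point chain near the shear-invariant directions $e_1,e_2$, which is what forces $p(A)$ to be small and controls the exponent; and (ii) arranging the final probabilistic step so that the degradation of $\rho_A$ with $A$ does not leak into $\bar D_q$ --- which requires exploiting the a priori bound $\norm{\phi_n}_{H^{-1}}\le\norm{\phi_0}_{H^1}$ together with a martingale-type argument rather than a naive union bound over all times.
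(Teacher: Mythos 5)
Your high-level architecture (two-point chain, quantitative Harris with an explicit drift and minorization, then transfer to almost-sure $H^{-1}$ decay with $A$-independent moments for $D_A$) matches the paper's. But the place you locate the difficulty --- and hence the source of the exponent $A^{96}$ --- is not where the paper's proof actually lives, and your minorization scheme as described would not produce a uniform small set. In the paper the Lyapunov function is simply $V=\abs{x-y}_\infty^{-p}$ on the off-diagonal product space (no projectivization), with $p$ and the contraction factor $\gamma$ both \emph{independent of $A$}, imported from prior work; only the additive constant picks up a harmless factor $A^{2p}$. So there is no trade-off in $p(A)$ and the drift contributes nothing to the $A^{96}$. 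The entire exponent comes from the minorization, and your account of that step is too optimistic: a coupling window of $O(\log(1/\epsilon_A)/\log A)=O(1)$ double-steps with per-step Jacobian costs that are fixed powers of $A$ would yield $\alpha_A$ polynomial in $A$ and hence a polynomial-in-$A$ mixing rate, which is not what this argument can deliver. More importantly, you give no mechanism for steering \emph{every} configuration in the sublevel set into a \emph{common} state with a probability you can lower-bound uniformly; "the implicit-function Jacobians" alone only give a small set whose minorizing measure depends on the starting point.

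The missing idea is the paper's set $R=\set{(x,y):y-x=(\pi,\pi)}$. Each $z\in R$ is a fixed point of the two-step map for the explicit phase choice $\xi^1(z)=(x_2,x_1)$, and the Jacobian in the phases at these fixed points has determinant exactly $4A^6$, uniformly over $R$. A quantitative inverse/implicit function theorem then shows that a ball of radius $r_1\sim A^{-95}$ around any $z\in R$ is small in two steps, with minorizing measure the Lebesgue measure on a \emph{concentric} ball of radius $r_2\sim A^{-37}$ --- concentricity is what allows iteration. The uniform coupling is then achieved in three phases: (i) any sufficiently separated pair reaches $R$ in $N_1\lesssim A$ steps with probability $A^{-CA^2}$ (using the reachability lemmas of Blumenthal--Coti Zelati--Gvalani); (ii) one walks along $R$ through a chain of $\sim 2\pi/r_1\sim A^{95}$ overlapping balls of radius $r_1$ to the fixed target $z_*=((0,0),(\pi,\pi))$, each hop costing a factor $c_2 r_1^4$; (iii) one iterates the small-set property at $z_*$. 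Phase (ii) is what forces $M\sim A^{95}$ and $\alpha\sim A^{-CA^{95}}$, whence the rate $\exp(-A^{96})$. Without this fixed-point structure and the chaining along $R$, your proposal has no way to obtain a minorization uniform over the sublevel set, so the Harris step does not close. Your final paragraph on transferring geometric ergodicity to~\eqref{e:mixing} and~\eqref{e:Dmoment-bound} is consistent in spirit with the paper's use of the arguments from Cooperman--Iyer--Son, though the paper controls the moments by tracking how the exponent $\zeta\sim\alpha^2$ enters the prior proof rather than by a separate martingale argument.
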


\begin{remark}
   Combining the argument in~\cite[Theorem~1.1, Step~2]{CoopermanIyerEA25} with the $C^1$-regularity of the flow map, one can extend Theorem~\ref{t:main-theorem} to show that the mixing estimate~\eqref{e:mixing} holds starting from any time $m \in 2\mathbb{N}$ and for all flow times $t \ge 0$, at the expense of a time-dependent prefactor. More precisely, for all sufficiently large $A$, there exists a finite random variable $D_A' > 0$ such that, almost surely, for all $m \in 2\mathbb{N}$ and $t \ge 0$,
\begin{equation}
    \|\phi_{m+t}\|_{H^{-1}} 
    \le A^2 D_A' \,(m^2 + 1)\,
    \exp\paren*{- \exp(-A^{96}) \,\floor*{t/2}}\,
    \|\phi_m\|_{H^1}.
\end{equation}
In addition, $D_A'$ has $A$-independent finite moments; that is,
\begin{equation}
    \mathbb{E}[D_A'] \le \bar D < \infty\,,
\end{equation}
for some constant $\bar D$ independent of $A$.

We do not include this extension in the statement or proof of the theorem, as our main focus is on the mixing \emph{rate} rather than on optimizing the prefactor in front of the decay estimate.
\end{remark}

\subsection{Motivation and Background}
We now motivate the problem and introduce the relevant background to place our result in the context of the existing literature.

The alternating sine–shear flow introduced by Pierrehumbert in~\cite{Pierrehumbert94} has become a canonical model in the study of chaotic advection and tracer microstructure formation. Its appeal lies in its simplicity and its ability to generate stretching, folding, and small scales, making it an ideal setting to study mixing phenomena in geophysical and engineering contexts (see, e.g.,\cite{Beron2009, ArefEA17}).

The chaotic dynamics of such flows can be rigorously analyzed through \emph{mixing}. Broadly speaking, mixing describes the creation of finer scales and larger gradients in transported scalar fields, and its mathematical analysis has been the focus of extensive research~\cite{ConstantinKiselevEA08, LinThiffeaultEA11, Seis13, MilesDoering18}. Explicit constructions of rapidly mixing velocity fields are available in works such as~\cite{YaoZlatos17, AlbertiCrippaEA19, ElgindiZlatos19, BedrossianBlumenthalEA22, ElgindiLissEA25}. Moreover, mixing plays a central role in a variety of applications: it underlies enhanced dissipation~\cite{BedrossianBlumenthalEA21, CotiZelatiDelgadinoEA20, CoopermanIyerSon25, TaoZworski25, FengIyer19}, the suppression of singularity formation in nonlinear PDEs~\cite{KiselevXu16, BedrossianHe17, IyerXuEA21, He23}, and rapid convergence of stochastic diffusions~\cite{ChristieFengEA23}.

In connection with the Pierrehumbert flow, numerical evidence in~\cite{ChengRajasekaranEA23} suggested that a closely related sawtooth shear exhibits fast (geometric) mixing. This was rigorously confirmed in~\cite{BlumenthalCotiZelatiEA23}, which proved exponential mixing for the Pierrehumbert flow. Specifically, \cite{BlumenthalCotiZelatiEA23} shows that there exists $\gamma>0$ and a finite random variable $D>0$ such that, almost surely,
\begin{equation}\label{e:exp-mix-pierrehumbert}
    \|\phi_t\|_{H^{-1}}
    \le D\, e^{-\gamma t}\, \|\phi_0\|_{H^1},
    \qquad \forall t \ge 0,
\end{equation}
for every $\phi$ that solves~\eqref{e:transport} with velocity field $u$ defined in~\eqref{e:udef}.
The mixing rate~$\gamma$, however, is not explicit as the proof in~\cite{BlumenthalCotiZelatiEA23} produces it through arguments of Furstenberg.

In this paper we are interested in quantifying the dependence of~$\gamma$ on the flow amplitude~$A$.
Numerical simulations in~\cite{ChengRajasekaranEA23} indicate that the effective mixing rate increases with~$A$, i.e.\ one expects $\gamma(A)\to\infty$ as $A\to\infty$. Related results in~\cite[Theorem~2]{Cooperman23} show that for deterministic phase shifts and random switching times, the logarithmic mixing scale increases at least linearly in~$A$. However, rigorously capturing this dependence in the Pierrehumbert setting appears to be challenging.

The main contribution of this paper is to make one step in this direction.
Indeed, the quantitative estimate~\eqref{e:mixing} we prove yields $\gamma(A)=\exp(-A^{96}) \to 0$ as $A\to\infty$.
This is weaker than the expected growth, and the reason for this is mainly
due to the difficulty in tracking how successive orthogonal shears interact at high amplitudes. For example, a thin vertical strip is stretched to scales of order $A^{-1}$ after one horizontal shear, but following the vertical shear, the resulting structure becomes highly irregular and spatially inhomogeneous. It is unclear whether scales of order $A^{-2}$ are produced uniformly, and this loss of coherence between shearing steps prevents a straightforward iteration of $A$-dependent mixing gains.

To overcome this challenge, we develop a new approach that makes the dependence on~$A$ explicit. Rather than tracing constants implicitly through the proof of~\cite{BlumenthalCotiZelatiEA23}, which relies on qualitative tools such as equidistribution of irrational rotations, we construct explicit Lyapunov functions and coupling trajectories for the associated two-point Markov process. A key step is the identification of a family of ``small sets'' in phase space whose properties remain uniformly controlled in~$A$, allowing for the application of a quantitative Harris theorem. To the best of our knowledge, this yields the first explicit quantitative upper bound on the mixing rate for the Pierrehumbert flow.

Finally, we note that the mixing rate $\gamma(A)$ cannot grow faster than linearly in~$A$ (see, for example,~\cite{IyerKiselevEA14}). Taking this upper bound, the numerical evidence in~\cite{ChengRajasekaranEA23}, and heuristic insights from~\cite{Cooperman23} into account, it is natural to ask whether one can improve upon~\eqref{e:mixing} to obtain sharper estimates or even saturate the optimal scaling $\gamma(A)\sim A$.

\subsection*{Plan of the Paper}

In Section~\ref{s:notations}, we introduce the notation and preliminaries. Section~\ref{s:statement-of-harris} states the Harris conditions in Lemmas~\ref{l:Lyapunov-drift} and~\ref{l:Harris-conditions}.

In Section~\ref{s:proof-of-r1-uniform-small}, we prove the uniform small set property of the set $R$ (Lemma~\ref{l:R1-uniform-small}), relying on Lemma~\ref{l:z-smallset}. The proof of Lemma~\ref{l:z-smallset} is given in Section~\ref{s:quantitative-ift-ift}, using quantitative versions of the inverse and implicit function theorems.

Section~\ref{s:verify-Harris} is devoted to proving the Harris conditions. In Section~\ref{s:off-diagonal-small}, we state and prove Lemmas~\ref{l:smallset-to-R1} and~\ref{l:R_1-small} to establish the small set property of the off-diagonal set (Lemma~\ref{l:final-small-set}). Using this, we then prove Lemmas~\ref{l:Lyapunov-drift} and~\ref{l:Harris-conditions} in Section~\ref{s:proof-of-harris-lemma}.

Finally, Section~\ref{s:proof-of-main-theorem} proves the main result, Theorem~\ref{t:main-theorem}, by applying Lemma~\ref{l:Harris-conditions} and the quantitative Harris theorem.

\section*{Acknowledgement}
The author thanks Gautam Iyer for his helpful suggestions and advice.

\section{Statement of Harris Conditions}

In this section, we introduce the notation and necessary preliminaries, and then state Lemma~\ref{l:Lyapunov-drift} and Lemma~\ref{l:Harris-conditions}, which provide the Harris conditions required to prove Theorem~\ref{t:main-theorem} in Section~\ref{s:proof-of-main-theorem}.

\subsection{Notation and Preliminaries}\label{s:notations}
\subsection*{Random Dynamical System}

Let $\xi^m \defeq (\xi_1, \ldots, \xi_{2m}) \in \T^{2m}$, and define the velocity field 
$u_m: [0, 2m) \times \T^2 \to \R^2$ by
\begin{equation}\label{e:umdef}
    u_m(t, x; \xi^m) =
    \begin{cases}
        A \sin(x_2 - \xi_{2n+1}) e_1, & t \in [2n, 2n+1), \\[2mm]
        A \sin(x_1 - \xi_{2n+2}) e_2, & t \in [2n+1, 2n+2),
    \end{cases}
\end{equation}
and consider the corresponding flow $\Phi: [0, 2m) \times \T^2 \to \T^2$ defined by
\begin{equation}
    \partial_t \Phi(t, x; \xi^m)
    = u_m\big(t, \Phi(t, x; \xi^m); \xi^m\big), 
    \qquad \Phi(0, x; \xi^m) = x.
\end{equation}

Given a random sequence $(\zeta_n)_{n \geq 1}$ taking values in $\T$, define 
$\underline{\zeta}^m \defeq (\zeta_1, \ldots, \zeta_{2m})$. 
Then the sequence $(\Phi(2m, x; \underline{\zeta}^m))_{m \ge 1}$ forms a random dynamical system satisfying
\begin{equation}
    \Phi(2m+2, x; \underline{\zeta}^{m+1}) 
    = \Phi\big(2,\, \Phi(2m, x; \underline{\zeta}^m);\, (\zeta_{2m+1}, \zeta_{2m+2})\big)\,.
\end{equation}

Moreover, the corresponding two-point process
\begin{equation}
    \Phi^{(2)}(2m, (x,y); \underline{\zeta}^m) 
    = \big(\Phi(2m, x; \underline{\zeta}^m),\, \Phi(2m, y; \underline{\zeta}^m)\big),
    \qquad (x,y)\in \T^{2,(2)},
\end{equation}
defines a random dynamical system on the product space
\begin{equation}
    \T^{2,(2)} \defeq \T^2 \times \T^2 \setminus \Delta, 
    \qquad
    \Delta \defeq \{(x,y) \in \T^2 \times \T^2 \mid x = y\}.
\end{equation}
Here, the diagonal $\Delta$ is removed to exclude the trivial stationary measure of the two-point chain.

Throughout the paper, we write $z = (x,y)$ for points in $\T^{2,(2)}$ with $x, y \in \T^2$, and we often suppress the explicit dependence of $\Phi$ on deterministic $\xi^m$ or random $\underline{\zeta}^m$ when it is clear from context. For brevity, we also write
\[
\Phi_n(x), \qquad \Phi_{2n}^{(2)}(z, \underline{\zeta}^n), \qquad \text{or} \qquad \Phi_{2n}^{(2)}(z, \xi^n).
\]

\subsection*{Two-point Markov Process}

From the independence of $(\zeta_n)_{n\geq1}$, we see that the process 
$(\Phi_{2m}^{(2)}(z))_{m\geq 1}$, defined for any $z\in \T^{2,(2)}$, 
is a Markov chain taking values in $\T^{2,(2)}$. 
In particular, we define the notation
\begin{equation}
    P_A^{(2), n}(z, E) \defeq \P\brak{\Phi_{2n}^{(2)}(z) \in E},
    \qquad \forall n \in \N,
\end{equation}
to denote the $n$-step transition kernel of the Markov process $\Phi^{(2)}$.

We also recall the notion of a \emph{small set}, which will be used throughout the paper 
in proving the ergodicity of the chain $P_A^{(2)}$. 
\begin{definition}
Let $(E, \mathcal E)$ be a measurable space, and let 
$P: E \times \mathcal E \to [0,1]$ be a Markov transition kernel. 
A set $S \in \mathcal E$ is called \emph{small} if there exist $\alpha \in (0,1]$ 
and a probability measure $\nu$ on $(E, \mathcal E)$ such that
\begin{equation}
    \inf_{z \in S} P(z, \cdot) \geq \alpha\, \nu(\cdot).
\end{equation}
Here, $\nu$ is referred to as a \emph{minorizing measure} for $P$.
\end{definition}

\subsection{Harris Conditions}\label{s:statement-of-harris}

In this subsection, we state the Harris conditions used to prove Theorem~\ref{t:main-theorem}. 
These consist of a Lyapunov drift condition and a small set condition for the two-point Markov chain~$P_A^{(2)}$, formulated respectively in Lemma~\ref{l:Lyapunov-drift} and Lemma~\ref{l:Harris-conditions}. 
Once these are established, the quantitative Harris theorem~\cite{HairerMattingly11} implies the geometric ergodicity of~$P_A^{(2)}$, and hence the almost sure mixing of the Pierrehumbert flow, with explicit dependence on the amplitude~$A$.

The main contribution of this paper is the quantitative verification of the small set property in Lemma~\ref{l:Harris-conditions}, where we track the dependence of all constants on~$A$. 
Together with the Lyapunov drift condition, this provides a complete and quantitative proof of the Harris conditions required for Theorem~\ref{t:main-theorem}.

\begin{lemma}\label{l:Lyapunov-drift}
    There exist $A$-independent constants $p>0, \gamma\in (0,1)$, $K_1>0$, and a function $V\colon \T^{2, (2)}\to [1,\infty)$ such that for all sufficiently large $A$, 
    \begin{equation}\label{e:lyapunov-one-step}
        P_A^{(2)} V \leq \gamma V + K_1A^{2p}\,.
    \end{equation}
\end{lemma}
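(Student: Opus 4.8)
The natural candidate for the Lyapunov function is something like $V(z) = V(x,y) = d(x,y)^{-s}$ for a small exponent $s > 0$, where $d$ denotes a suitable (say, periodic Euclidean) distance on $\T^2$, possibly truncated/regularized so that $V \geq 1$ everywhere and $V$ stays bounded away from the diagonal appropriately. The drift condition then amounts to showing that, in one full period (two shear steps), the two-point distance $d(\Phi_2(x), \Phi_2(y))$ is, on average (after raising to the $-s$ power), contracted by a fixed factor $\gamma < 1$, except on a region near the diagonal where the randomness of the shifts can only blow things up by a controlled amount — and that controlled amount is what produces the additive constant $K_1 A^{2p}$. Concretely, I would write $P_A^{(2)} V(z) = \mathbb{E}[d(\Phi_2(x;\zeta), \Phi_2(y;\zeta))^{-s}]$ and split the expectation according to whether the separation after the step is "large" (say $\gtrsim$ some $A$-independent threshold) or "small."

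First I would estimate the one-step action of a single random shear on a pair of points. A shear $x \mapsto (x_1 + A\sin(x_2 - \zeta), x_2)$ changes the horizontal separation of two points by an amount depending on $A(\sin(x_2 - \zeta) - \sin(y_2 - \zeta))$, while leaving vertical separation untouched; composing with the orthogonal shear does the same with roles reversed. The key quantitative input is that for a uniformly random $\zeta$, the map is "expanding in expectation" in the following averaged sense: for fixed separation, the expected value of $d(\cdot,\cdot)^{-s}$ after one period is strictly smaller than $d(z)^{-s}$ times a constant $\gamma_0 < 1$, uniformly in $A$ large, as long as the initial separation is below a fixed scale — this is precisely the mechanism behind exponential mixing, and the $s$ is chosen small enough that the (rare) events where the sines nearly cancel, i.e. $\zeta$ lands in a bad window of size $O(d(z))$, do not overwhelm the gain. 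On that bad event the separation can shrink, but only to something like $d(z) \cdot (\text{bounded})$ down to scale $\gtrsim A^{-1} d(z)$ at worst after controlling the shear derivatives (which are $O(A)$), so $d(\cdot)^{-s}$ is inflated by at most a factor $A^{s}$ on a set of probability $O(d(z))$; choosing $s$ small makes $A^s \cdot (\text{small probability})$ negligible, and the clean event contributes $\gamma_0 V$. For $z$ already at large separation (the complement of a fixed neighborhood of $\Delta$), $V(z)$ is bounded by an $A$-independent constant, and after one period $V(\Phi_2^{(2)}(z))$ is at worst $d(\Phi_2^{(2)}(z))^{-s} \lesssim (A^{-1} \cdot \text{stuff})^{-s} \lesssim A^{s}$ — this is where an explicit power of $A$ enters the additive constant, giving $K_1 A^{2p}$ with, say, $p = s$ (or $2p = s$ after tracking the two steps). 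Collecting: $P_A^{(2)} V \leq \gamma_0 V + C A^{s}$ on the near-diagonal region and $P_A^{(2)} V \leq C A^{s} \leq \gamma V + C A^{s}$ elsewhere, which is the claimed inequality after renaming constants.

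The main obstacle I anticipate is making the "expanding in expectation" estimate genuinely uniform in $A$ — that is, proving $\mathbb{E}[d(\Phi_2^{(2)}(z))^{-s}] \leq \gamma V(z)$ for small separations with $\gamma$ and $s$ not degenerating as $A \to \infty$. The subtlety flagged in the introduction is real here: after the first shear, a pair that was vertically separated becomes horizontally separated by an amount $\sim A \cdot (\text{vertical sep})$, which for vertical separations $\gg A^{-1}$ wraps around the torus and the structure becomes spatially inhomogeneous; the second (orthogonal) shear then acts on a separation vector whose horizontal component may be large and whose geometry is hard to track. I would handle this by restricting the Lyapunov estimate to the genuinely near-diagonal regime (separations $\ll A^{-1}$, so that $A \sin(x_2-\zeta) - A\sin(y_2 - \zeta) \approx A\cos(\cdot)(x_2 - y_2)$ behaves linearly and the derivative of the shear along the separation is $\approx A \cos(x_2 - \zeta)$), computing there that the vertical-to-horizontal transfer multiplies the "horizontal content" of the separation by a factor whose $-s$-moment against uniform $\zeta$ is controlled by $\int_\T |A\cos\theta|^{-s}\,\tfrac{d\theta}{2\pi}$, which is finite and, crucially, $\leq C A^{-s} \leq 1$ for $A$ large when $s<1$ — so the transfer is favorable, not unfavorable, at large $A$. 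For the intermediate regime $A^{-1} \lesssim d(z) \lesssim 1$ (the "bad coherence" zone), I would not attempt to prove contraction; instead I absorb this whole annulus into the additive constant by noting $V$ is bounded there by $A^{s}$ up to constants and simply using $P_A^{(2)} V \leq C A^{s}$ crudely, which still yields $\gamma V + K_1 A^{2p}$. Thus the drift condition tolerates the loss of coherence precisely because it only needs genuine contraction asymptotically close to the diagonal, deferring all the hard $A$-dependent mixing work to the small-set condition of Lemma~\ref{l:Harris-conditions}.
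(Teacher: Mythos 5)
Your overall strategy matches the paper's: the Lyapunov function is $V=\abs{x-y}_\infty^{-p}$ (continuously extended off a neighborhood of the diagonal), genuine contraction $P_A^{(2)}V\le\gamma V$ is only established near the diagonal, and the additive term $K_1A^{2p}$ comes from the worst-case fact that one period can shrink separations by at most a factor $A^{-2}$. The one structural difference is that the paper does not reprove the near-diagonal contraction at all: it invokes \cite[Proposition 9.1]{CoopermanIyerSon25}, which supplies $P_A^{(2)}V\le\gamma V$ on the \emph{entire} fixed neighborhood $\Delta(s_*)$ with $s_*,\gamma,p$ independent of $A$ --- not merely for separations $\ll A^{-1}$ as in your plan. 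Consequently the paper has no ``intermediate annulus'' to deal with; the crude bound is applied only where $d(z)\ge s_*$ and $V$ is an $A$-independent constant, yielding exactly $A^{2p}s_*^{-p}$.

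There is one step in your plan that fails as written. On the annulus $A^{-1}\lesssim d(z)\lesssim 1$ you assert $P_A^{(2)}V\le CA^s$ ``since $V$ is bounded there by $A^s$.'' But $P_A^{(2)}V(z)=\E\brak{d(\Phi_2^{(2)}(z))^{-s}}$, and the separation can contract by a further factor $A^{-2}$ in one period, so the honest crude bound on that annulus is $(A^{-2}\cdot A^{-1})^{-s}=A^{3s}$, not $A^{s}$. This does not sink the lemma --- it holds with $2p$ renamed to $3s$ --- but it is not free downstream: the sublevel set $\set{V\le R_2}$ in Lemma~\ref{l:Harris-conditions} would then only be contained in $\set{d\gtrsim A^{-3}}$ rather than $\set{d\gtrsim A^{-2}}$, which inflates $N_1$, $p_A$, and $M$ in Lemmas~\ref{l:smallset-to-R1} and~\ref{l:final-small-set} and hence the final rate. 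If you intend to prove the contraction yourself rather than cite it, the real work is precisely extending your linearized $d\ll A^{-1}$ computation to all of $\Delta(s_*)$ with $\gamma$ uniform in $A$, which is what the cited proposition provides.
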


\begin{lemma} \label{l:Harris-conditions}
    Let $p, \gamma, K_1, V$ be as in Lemma~\ref{l:Lyapunov-drift}. Then, for all sufficiently large $A$, there exist $A$-dependent constants $R_2 = 2CK_1A^{2p} / (1-\gamma)$, $M \leq CA^{95}$, and a probability measure $\nu_A$ on $\T^{2, (2)}$ such that we have
    \begin{gather}
        \label{e:lyapunov-multi-step}
        P_A^{(2), M}V \leq \gamma^M V + K_1 A^{2p} \frac{1-\gamma^M}{1-\gamma}\,,\\
        \label{e:sublevel-V-small}
        \inf_{z\in \set{V \leq R_2}} P_A^{(2), M}(z, \cdot) \geq A^{-CA^{95}} \nu_A(\cdot)\,.
    \end{gather}
    Here, $C>1$ is an $A$-independent constant.
\end{lemma}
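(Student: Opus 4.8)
The plan is to derive \eqref{e:lyapunov-multi-step} by directly iterating the one-step drift of Lemma~\ref{l:Lyapunov-drift}, and to derive \eqref{e:sublevel-V-small} by showing that the sublevel set $\{V \le R_2\}$ lies inside the off-diagonal region treated in Lemma~\ref{l:final-small-set}, whose quantitative small-set estimate then transfers verbatim. For \eqref{e:lyapunov-multi-step} I would argue by induction on $M$: the case $M=1$ is \eqref{e:lyapunov-one-step}, and for the inductive step I apply the Markov kernel $P_A^{(2)}$ to the bound at level $M$; since $P_A^{(2)}$ is positivity-preserving and fixes constants,
\[
P_A^{(2),M+1}V = P_A^{(2)}\bigl(P_A^{(2),M}V\bigr) \le \gamma^{M}P_A^{(2)}V + K_1A^{2p}\frac{1-\gamma^{M}}{1-\gamma} \le \gamma^{M+1}V + K_1A^{2p}\Bigl(\gamma^{M} + \frac{1-\gamma^{M}}{1-\gamma}\Bigr),
\]
and the last parenthesis equals $(1-\gamma^{M+1})/(1-\gamma)$. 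This holds for every $M$, in particular for the value $M \le CA^{95}$ selected below.

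For \eqref{e:sublevel-V-small}, the first step is to translate the sublevel condition into a geometric separation. Since $R_2 = 2CK_1A^{2p}/(1-\gamma)$ is of order $A^{2p}$ and the Lyapunov function $V$ constructed in Lemma~\ref{l:Lyapunov-drift} grows like a fixed negative power of $\dist(x,y)$ as $(x,y)$ approaches the diagonal $\Delta$, the set $\{V \le R_2\}$ is contained in $\{z=(x,y)\in\T^{2,(2)} : \dist(x,y)\ge \delta_A\}$ for an explicit $\delta_A$ that is a negative power of $A$. This region is (or is contained in) exactly the off-diagonal set to which Lemma~\ref{l:final-small-set} applies.

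I would then invoke Lemma~\ref{l:final-small-set} — itself obtained by combining the uniform small-set property of the core region $R$ (Lemma~\ref{l:R1-uniform-small}, whose minorization mass is controlled uniformly in $A$) with the estimates of Lemmas~\ref{l:smallset-to-R1} and~\ref{l:R_1-small} on the number of steps, and the probability, needed to drive an arbitrary $\delta_A$-separated pair into $R$ — to produce $M \le CA^{95}$, a constant $\alpha \ge A^{-CA^{95}}$, and a probability measure $\nu_A$ with $\inf_{z : \dist(x,y)\ge\delta_A} P_A^{(2),M}(z,\cdot) \ge \alpha\,\nu_A(\cdot)$. Restricting this minorization to the smaller set $\{V\le R_2\}$ yields \eqref{e:sublevel-V-small}, with the same $M$ chosen in the first paragraph, and with $C>1$ absorbing the constants from Lemmas~\ref{l:R1-uniform-small} and~\ref{l:final-small-set}.

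The main obstacle is everything upstream, namely the quantitative form of Lemma~\ref{l:final-small-set}: one must show that the minorization mass guaranteed after $O(A^{95})$ steps, starting from an arbitrary $\delta_A$-separated pair, is still at least $A^{-O(A^{95})}$, with the two exponents matched so that a single constant $C$ serves for both $M\le CA^{95}$ and $\alpha\ge A^{-CA^{95}}$. The difficulty is uniformity over starting configurations: a $\delta_A$-separated pair may sit in a highly degenerate position relative to the shear axes, and one must both show it reaches the good region $R$ (where the inverse/implicit function theorem construction of Section~\ref{s:proof-of-r1-uniform-small} produces a minorizing measure) within $O(A^{95})$ steps and lower-bound the probability of this event. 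As explained in the introduction, orthogonal shears at amplitude $A$ can create spatially inhomogeneous, irregular structures, so one cannot iterate $A$-dependent gains naively; instead one pays a factor of order $A^{-O(1)}$ at each of the $O(A^{95})$ steps, which is precisely what degrades the minorization to $A^{-O(A^{95})}$ and, through the quantitative Harris theorem, produces the rate $\exp(-A^{96})$ in Theorem~\ref{t:main-theorem}.
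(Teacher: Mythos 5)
Your proposal is correct and follows essentially the same route as the paper: the multi-step drift \eqref{e:lyapunov-multi-step} is obtained by iterating the one-step bound \eqref{e:lyapunov-one-step}, and \eqref{e:sublevel-V-small} is obtained by noting that $V(x,y)\sim|x-y|_\infty^{-p}$ near the diagonal forces $\set{V\le R_2}\subset\Delta(A^{-2}s_*')^c$ for an $A$-independent $s_*'$, after which Lemma~\ref{l:final-small-set} supplies $M\le CA^{95}$ and the minorization constant $A^{-CA^{95}}$ (with $\nu_A$ the normalized Lebesgue measure on $B_\infty(z_*,r_2)$). The difficulties you flag in the final paragraph all live in the already-established upstream Lemmas~\ref{l:R1-uniform-small}--\ref{l:final-small-set}, so they do not constitute gaps in this proof.
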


For continuity, we defer the proofs of Lemma~\ref{l:Lyapunov-drift} and Lemma~\ref{l:Harris-conditions} to Section~\ref{s:verify-Harris}. In particular, to prove the small set condition~\eqref{e:sublevel-V-small} 
in Lemma~\ref{l:Harris-conditions}, we require a key intermediate result—%
Lemma~\ref{l:R1-uniform-small}—which establishes the uniform small set 
property of a specific subset \(R \subset \T^{2,(2)}\). 
Since this lemma forms one of the central technical contributions of the paper, 
we first state and prove it separately in Section~\ref{s:proof-of-uniform-small-set}. 
Afterward, we proceed to prove the auxiliary lemmas needed for 
Lemmas~\ref{l:Lyapunov-drift} and~\ref{l:Harris-conditions}, 
and finally prove both in Section~\ref{s:verify-Harris}.

\section{Uniform Small Set Property of the Set \(R\)}\label{s:proof-of-uniform-small-set}

In this section, we prove Lemma~\ref{l:R1-uniform-small}, 
which establishes the uniform small set property of the set
\begin{equation}\label{e:Rdef}
    R \defeq \{(x, y) \in \T^{2,(2)} : y - x = (\pi, \pi)\}.
\end{equation}
This lemma plays a key role in the proof of 
Lemma~\ref{l:Harris-conditions} in Section~\ref{s:verify-Harris} 
and provides estimates that remain quantitatively tractable 
in terms of the amplitude~\(A\).

Before proceeding with the proof, we introduce notation that will be used throughout the remainder of the paper. 
We write \(z = (x, y)\) for points in \(\T^{2,(2)}\) with \(x, y \in \T^2\), and denote the coordinates of \(x\) by \(x_1, x_2 \in \T\). 
The symbols \(\xi\) and \(\zeta\) represent deterministic and random points in~\(\T\), respectively, and the superscript~\(n\) indicates that these are \(2n\)-dimensional vectors:
\begin{equation}
    \xi^n,\, \underline{\zeta}^n \in \T^{2n}\,.
\end{equation}
We also denote by \(B_\infty(z, r)\) the \(\ell^\infty\)-ball of radius~\(r\) centered at~\(z\).

We now state the main result of this section.

\begin{lemma}\label{l:R1-uniform-small}
    There exists $A$-independent constant $C_1, C_2, C_3, C_4>0$ such that for any $z\in R$ and $n\geq 2$,
    \begin{equation}\label{e:zR-uniform-small}
        \inf_{z'\in B_\infty(z, r_1(A))} P_A^{(2), n}(z', \cdot) \geq c_1(A)^{\floor{\frac{n}{2}}-1} c_2(A) \Leb|_{B_\infty(z, r_2(A))}\,.
    \end{equation}
    Here, $r_1, r_2, c_1$, and $c_2$ are $A$-dependent but $z, n$-independent constants defined by
    \begin{align}
        r_1(A) = C_1A^{-95}\,,\quad r_2(A) = C_2 A^{-37}\,,\quad c_1(A)=C_3 A^{-378}\,,\quad c_2(A)=C_4 A^{-196}\,.
    \end{align}
\end{lemma}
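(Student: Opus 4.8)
The plan is to show that after $n\geq 2$ steps of the two-point chain starting from any $z'$ close to a point $z\in R$, the transition kernel has a density that is bounded below on a small $\ell^\infty$-ball around $z$. The key structural feature of $R$ is that $y-x=(\pi,\pi)$: when $x,y$ differ by $(\pi,\pi)$, the two shears act in a strongly \emph{opposite} fashion — if $u$ pushes $x$ to the right by $A\sin(x_2-\zeta)$, it pushes $y$ by $A\sin(x_2-\pi-\zeta)=-A\sin(x_2-\zeta)$, so the relative displacement $y-x$ doubles in the stretched direction at first order. This is the mechanism that makes $R$ a good ``seed'' set, and it is encoded in the (earlier, to-be-proved) Lemma~\ref{l:z-smallset}, which should give a one-or-two-step minorization of the single-step kernel near configurations like those in $R$ with constants that track powers of $A$. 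My first step is therefore to invoke Lemma~\ref{l:z-smallset} to produce, for any $z'\in B_\infty(z,r_1(A))$, a lower bound $P_A^{(2)}(z',\cdot)\geq c(A)\,\Leb|_{S}$ on the first step, where $S$ is a suitable small set whose geometry (size $\sim A^{-\text{something}}$) is uniformly controlled; the exponents $95$, $37$, $378$, $196$ in the statement will come from bookkeeping the quantitative inverse/implicit function theorem bounds there.

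The second step is an iteration/propagation argument. Having spread mass to Lebesgue measure on a small set after one (or two) steps, I want to keep pushing that absolutely-continuous mass forward for the remaining $\lfloor n/2\rfloor-1$ (or so) pairs of steps while only losing a factor $c_1(A)$ per pair, and finally land with positive density on $B_\infty(z,r_2(A))$. The natural way is a Chapman–Kolmogorov decomposition: write $P_A^{(2),n}(z',\cdot)\geq \int P_A^{(2),2}(z',dw)\,P_A^{(2),n-2}(w,\cdot)$, restrict the inner integral to the small set $S$ on which we have a density lower bound, and then argue that from every point of $S$ the chain returns to a configuration near $R$ (or near the relevant small set) with controlled probability after two more steps. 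Because $R$ is a codimension-two submanifold, one cannot literally return \emph{to} $R$, so the right statement is that the two-step kernel from $S$ dominates $c_1(A)$ times Lebesgue measure on a slightly smaller small set again — i.e. the family of small sets is \emph{self-improving} under two steps with a uniform-in-$A$ multiplicative loss $c_1(A)=C_3A^{-378}$. Iterating this $\lfloor n/2\rfloor -1$ times and using that the shears are volume-preserving diffeomorphisms with Jacobian bounds polynomial in $A$ gives the claimed product $c_1(A)^{\lfloor n/2\rfloor-1}c_2(A)$, with the final $c_2(A)=C_4A^{-196}$ absorbing the last comparison between the small set produced and the target ball $B_\infty(z,r_2(A))$.

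The main obstacle — and the place where the amplitude dependence is genuinely delicate — is exactly the self-improvement step: verifying that the small-set family is preserved under the composition of two orthogonal shears \emph{with constants uniform in $A$}. This is the ``loss of coherence between shearing steps'' flagged in the introduction: after one shear a thin strip is stretched to width $\sim A^{-1}$, but the following orthogonal shear acts on a highly oscillatory configuration, and one must show the image still contains a controlled small set rather than being smeared out in an $A$-dependent way. Concretely, this requires applying the quantitative inverse/implicit function theorem (as packaged in Lemma~\ref{l:z-smallset} and Section~\ref{s:quantitative-ift-ift}) to the composed flow map $\Phi_4^{(2)}$ and checking that the relevant derivative has a lower bound on its smallest singular value that degrades at most polynomially in $A$ — this is what pins down the large exponents and is the technical heart of the lemma. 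Everything else (Chapman–Kolmogorov, volume preservation, passing from a small set to an $\ell^\infty$-ball, the $\lfloor n/2\rfloor$ counting) is routine once that uniform self-improvement is in hand.
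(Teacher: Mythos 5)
Your skeleton is right — invoke Lemma~\ref{l:z-smallset} near $R$ to get a one-shot minorization, then iterate via Chapman--Kolmogorov to account for the $\lfloor n/2\rfloor$ factor — but you are missing the two concrete mechanisms that make the argument actually close, and you misidentify where the difficulty lies.

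First, Lemma~\ref{l:z-smallset} requires a \emph{fixed point}: its hypothesis (1) is $\Phi_4^{(2)}(w,\xi)=w$. The paper supplies this by noticing that for $z=(x,y)\in R$ the choice $\xi^1(z)=(x_2,x_1)$ makes both shear amplitudes vanish (since $\sin(x_2-x_2)=0$ and $\sin(y_2-x_2)=\sin(\pi)=0$), so $\Phi_2^{(2)}(z,\xi^1(z))=z$ and hence $\Phi_4^{(2)}(z,\xi^2(z))=z$ with $\xi^2(z)=(\xi^1(z),\xi^1(z))$; a translation-covariance identity then shows $D_\xi\Phi_4^{(2)}(z,\xi^2(z))$ is the same for every $z\in R$, and its determinant is $4A^6$ by the explicit computation in Lemma~\ref{l:det-lb}. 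Your "opposite shears double the relative displacement" heuristic points at the right derivative structure but does not produce the fixed point, so as written you cannot actually invoke Lemma~\ref{l:z-smallset}.

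Second, you describe the iteration as a delicate "self-improvement" of a small-set family under two orthogonal shears and flag it as the technical heart. That concern is misplaced here (the introduction's "loss of coherence" remark is an explanation of why the \emph{rate} $\gamma(A)$ degrades with $A$, not an obstruction in this lemma). Once Lemma~\ref{l:z-smallset} is applied, the minorizing measure $\mu=CA^{-6}\Leb|_{B(z,CA^{-37})}$ is supported on a ball \emph{concentric with and containing} the small set $B(z,CA^{-93})$, since $A^{-93}<A^{-37}$. Hence $\mu(B(z,CA^{-93}))=CA^{-93\cdot 4-6}=CA^{-378}=c_1(A)$, and the Markov property gives $\inf_{z'\in B(z,CA^{-93})}P_A^{(2),2n}(z',\cdot)\ge c_1(A)^{n-1}\mu(\cdot)$ by a one-line induction — no further analysis of the shear composition is needed. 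You also omit the final ingredient: a Lipschitz bound on the flow ($\abs{\Phi_2^{(2)}(z',\underline\zeta^1)-\Phi_2^{(2)}(z,\xi^1(z))}\le CA^2\cdot CA^{-95}$) shows that from any $z'\in B(z,CA^{-95})$ one enters $B(z,CA^{-93})$ in two steps with probability at least $CA^{-190}$, which is where the exponent in $c_2(A)=CA^{-6}\cdot CA^{-190}=CA^{-196}$ comes from. The genuine technical content is entirely inside Lemma~\ref{l:z-smallset}; the iteration in this lemma is routine given the concentric-ball structure of its conclusion.
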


\subsection{Proof of Lemma~\ref{l:R1-uniform-small}}\label{s:proof-of-r1-uniform-small}

The following lemma serves as the key ingredient in the proof of Lemma~\ref{l:R1-uniform-small}. For clarity, we defer its proof to Section~\ref{s:quantitative-ift-ift} and begin by proving Lemma~\ref{l:R1-uniform-small} below.

\begin{lemma}\label{l:z-smallset}
Suppose that $w \in \T^{(2), 2}, \xi \in \T^4$, and $s>0$ satisfy
\begin{enumerate}
    \item \label{i:wxi-fp} $\Phi_4^{(2)}(w, \xi) = w$.
    \item \label{i:wxi-lb} $\abs{\det D_\zeta \Phi_4^{(2)}(w, \xi)} \geq s$.
\end{enumerate}
Then, there exists a constant $C$, independent of $A, w, \xi$, and $s$, such that
    \begin{equation}\label{e:z_*-small}
        \inf_{z\in B(w, CA^{-105}s^2)}P_A^{(2), 2}(z,\cdot) \geq Cs^{-1}\Leb|_{B(w, CA^{-49}s^2)}\,.
    \end{equation}
\end{lemma}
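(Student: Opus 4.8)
The plan is to recognize $P_A^{(2),2}(z,\cdot)$ as a pushforward of Lebesgue measure under the phase-to-endpoint map and then apply a quantitative inverse function theorem at the base point $(w,\xi)$. Since $\underline{\zeta}^2=(\zeta_1,\dots,\zeta_4)$ is uniform on $\T^4$, the kernel $P_A^{(2),2}(z,\cdot)$ is precisely the pushforward of $(2\pi)^{-4}\Leb_{\T^4}$ under $F_z \defeq \Phi_4^{(2)}(z,\cdot)\colon\T^4\to\T^{2,(2)}$; that is, $P_A^{(2),2}(z,E)=(2\pi)^{-4}\Leb_{\T^4}\bigl(F_z^{-1}(E)\bigr)$. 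Hence, whenever one can exhibit an open set $U_z\subseteq\T^4$ on which $F_z$ restricts to a $C^1$-diffeomorphism onto $F_z(U_z)$, the change-of-variables formula gives $P_A^{(2),2}(z,E)\ge(2\pi)^{-4}\bigl(\sup_{U_z}\abs{\det D_\zeta F_z}\bigr)^{-1}\Leb\bigl(E\cap F_z(U_z)\bigr)$. So I would reduce the lemma to showing that for every $z\in B(w,CA^{-105}s^2)$ there is such a $U_z$ with $F_z(U_z)\supseteq B(w,CA^{-49}s^2)$ and $\sup_{U_z}\abs{\det D_\zeta F_z}\le C^{-1}s$.

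To produce $U_z$, I would first record explicit derivative bounds for the flow on $\T^{2,(2)}\times\T^4$: that $\norm{D_\zeta\Phi_4^{(2)}}_{C^0}\le CA^{a}$, $\norm{D^2_\zeta\Phi_4^{(2)}}_{C^0}\le CA^{b}$, and $\norm{D_zD_\zeta\Phi_4^{(2)}}_{C^0}\le CA^{c}$ for $A$-independent exponents $a,b,c$, obtained by differentiating the defining ODE and propagating Gronwall's inequality across the four unit-time shears in $[0,4)$, each of which costs only a bounded power of $A$ because $A\sin(\cdot-\zeta)$ and its first two derivatives are $O(A)$. These are the origin of the exponents $37,49,95,105$. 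Next, combining hypothesis~\ref{i:wxi-fp} ($F_w(\xi)=w$), hypothesis~\ref{i:wxi-lb} together with Cramer's rule (a $4\times4$ matrix of operator norm $\le CA^{a}$ and determinant $\ge s$ has inverse of norm $\le CA^{3a}/s$), and the $C^2$ bound, I would invoke the quantitative inverse function theorem of Section~\ref{s:quantitative-ift-ift}: it produces a ball $B(\xi,\rho)$ with $\rho\sim sA^{-\alpha}$ on which $F_w$ is a $C^1$-diffeomorphism onto an open set containing $B(w,\rho')$ with $\rho'\sim s^2A^{-\beta}$, and, after shrinking $\rho$ so the oscillation of $\det D_\zeta F_w$ over $B(\xi,\rho)$ is at most $\tfrac12\abs{\det D_\zeta F_w(\xi)}$, keeps $\abs{\det D_\zeta F_w}$ on $B(\xi,\rho)$ between a fixed fraction and a fixed multiple of $\abs{\det D_\zeta F_w(\xi)}$. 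Bookkeeping $\alpha,\beta$ in terms of $a,b$, one checks $\rho'\ge 2CA^{-49}s^2$.

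Finally, I would transfer this to nearby base points. For $z\in B(w,CA^{-105}s^2)$ the mixed bound gives $\norm{F_z-F_w}_{C^1(\T^4)}\le CA^{c}\abs{z-w}\le CA^{c-105}s^2$; choosing the exponent $105$ large enough relative to $c$ and to the margins left in the previous step, a routine perturbation of the inverse function theorem — a Banach fixed point argument for the Newton map $y\mapsto\zeta+(D_\zeta F_w(\xi))^{-1}\bigl(y-F_z(\zeta)\bigr)$ — shows that $F_z$ is still a $C^1$-diffeomorphism on some $U_z\subseteq B(\xi,\rho)$ with $F_z(U_z)\supseteq B(w,CA^{-49}s^2)$ and $\sup_{U_z}\abs{\det D_\zeta F_z}\le 2\sup_{B(\xi,\rho)}\abs{\det D_\zeta F_w}\le C^{-1}s$. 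Substituting this $U_z$ into the reduction of the first paragraph then yields $\inf_{z\in B(w,CA^{-105}s^2)}P_A^{(2),2}(z,\cdot)\ge Cs^{-1}\Leb|_{B(w,CA^{-49}s^2)}$, as claimed; the step $\sup_{U_z}\abs{\det D_\zeta F_z}\le C^{-1}s$ is where, beyond hypothesis~\ref{i:wxi-lb}, one uses that $s$ is comparable to $\abs{\det D_\zeta\Phi_4^{(2)}(w,\xi)}$ in the regime where the lemma is applied. The main obstacle is exactly the quantitative inverse function theorem of Section~\ref{s:quantitative-ift-ift}, with every constant explicit in $A$, together with the bookkeeping of the polynomial losses (through Cramer's rule, the Jacobian oscillation control, and the base-point perturbation) so that the two radii come out precisely as $s^2A^{-49}$ and $s^2A^{-105}$; the delicate balance is that $U_z$ must be small enough that $\abs{\det D_\zeta F_z}$ stays at most a constant multiple of $s$ there, yet large enough that $F_z(U_z)$ still covers the ball $B(w,CA^{-49}s^2)$.
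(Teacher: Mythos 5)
Your proposal is correct and follows essentially the same route as the paper: the paper likewise reduces the kernel bound to a change of variables under $\zeta\mapsto\Phi_4^{(2)}(z,\zeta)$ and invokes the quantitative inverse function theorem of Section~\ref{s:quantitative-ift-ift}, the only difference being that where you handle the dependence on the base point $z$ by a Newton-map/Banach fixed-point perturbation, the paper packages that same step as a quantitative implicit function theorem producing $H(z)$ with $\Phi_4^{(2)}(z,H(z))=w$ and then applies the inverse function theorem at the recentered phase. Your remark that the upper bound $\sup\abs{\det D_\zeta F_z}\le Cs$ (hence the factor $s^{-1}$ in the minorization) really requires $s$ to be comparable to, and not merely a lower bound for, $\abs{\det D_\zeta\Phi_4^{(2)}(w,\xi)}$ is accurate and consistent with how the lemma is applied, namely with $s=4A^6=\abs{\det D_\zeta\Phi_4^{(2)}(z_*,\xi^2(z_*))}$.
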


\begin{proof}[Proof of Lemma~\ref{l:R1-uniform-small}]
By using the definition of the Pierrehumbert flow,
we observe that for any $z \in R$, the choice of $\xi^1(z)=\xi^1(x,y)=(x_2,x_1)$ yields
\begin{equation}\label{e:phi2-fp}
    \Phi_2^{(2)}(z, \xi^1(z)) = z\,.
\end{equation}
This implies
\begin{equation}\label{e:zxi-fp}
    \Phi_4^{(2)}(z, \xi^2(z)) = \Phi_2^{(2)}(\Phi_2^{(2)}(z, \xi^1(z)), \xi^1(z)) = z\,,
\end{equation}
where $\xi^2$ is a function defined as $\xi^2(z)=\xi^2(x,y)=(x_2,x_1,x_2,x_1)$.

Moreover, using the definition of the Pierrehumbert flow again, we notice that for any $z_1, z_2 \in R$ and $\xi \in \T^4$, 
\begin{equation}
    \Phi_4^{(2)}(z_1, \xi+\xi^2(z_1)) - z_1 = \Phi_4^{(2)}(z_2, \xi+\xi^2(z_2)) - z_2\,,
\end{equation}

Thus, differentiating both sides of the identity with respect to $\xi$ and setting $\xi = 0, z_1=z_*=((0,0),(\pi, \pi))$ yields that for any $z\in R$,
\begin{equation}\label{e:dzeta-same}
    D_{\xi}\Phi_4^{(2)}(z, \xi^2 (z)) = D_{\xi}\Phi_4^{(2)}(z_*, \xi^2(z_*))\,.
\end{equation}

In particular, for any $z\in R$, \eqref{e:zxi-fp}, \eqref{e:dzeta-same}, and~\eqref{e:det-dzeta-z*} satisfy the conditions (\ref{i:wxi-fp}) and (\ref{i:wxi-lb}) of Lemma~\ref{l:z-smallset} with $(w, \xi, s)=(z, \xi^2(z), 4A^6)$ and hence, \eqref{e:z_*-small} implies that for some constant $C$, independent of $z$ and $A$,
\begin{equation}
    \inf_{z'\in B(z, CA^{-93})}P_A^{(2), 2}(z',\cdot) \geq CA^{-6}\Leb|_{B(z, CA^{-37})}\,.
\end{equation}

Fix $z\in R$. Hereafter, we denote $C$ as a generic constant whose value may vary from line to line but remains independent of $z$ and $A$.
For the notational brevity, let $\mu$ be a measure on $\mathcal B(\T^{(2), 2})$ such that 
\begin{equation}\label{e:mudef}
\mu = CA^{-6}\Leb|_{B(z, CA^{-37})}\,.    
\end{equation}
Then we have $\mu(B(z,CA^{-93}))=C A^{-93*4-6}= C A^{-378} \defeq c_1(A)$. 

By using Markov property and a simple induction, we see that for any $n\geq 1$, 
\begin{equation}\label{e:even-small}
    \inf_{z'\in B(z, CA^{-93})} P_A^{(2), 2n}(z',\cdot) \geq c_1(A)^{n-1}\mu(\cdot)\,.
\end{equation}

Moreover, for any $z'\in B(z, CA^{-95})$ and $\underline{\zeta}^1 \in B(\xi^1(z), CA^{-95})$, using~\eqref{e:dzeta}, \eqref{e:dx}, and~\eqref{e:phi2-fp} yields
\begin{equation}
    \abs{\Phi_2^{(2)}(z', \underline{\zeta}^1) - z} = \abs{\Phi_2^{(2)}(z', \underline{\zeta}^1) - \Phi_2^{(2)}(z, \xi^1(z))} \leq CA^{-93}\,,
\end{equation}
and hence
\begin{equation}
    P_A^{(2), 2}(z', B(z, CA^{-93})) \geq \P[\underline{\zeta}^1 \in B(\xi^1(z), CA^{-95})] = CA^{-190}\,.
\end{equation}
Combining this with~\eqref{e:even-small} and using the Markov property, we get that for any $n \geq 2$,
\begin{equation}
    \inf_{z'\in B(z, CA^{-95})} P_A^{(2), n}(z', \cdot) \geq c_1(A)^{\floor{\frac{n}{2}}-1} CA^{-190} \mu(\cdot)\,.
\end{equation}
Finally, by substituting the definition of $\mu$ from~\eqref{e:mudef} and noting that the balls $B$ defined using the Euclidean norm are equivalent to the $B_\infty$ balls (up to a constant change in radius), we conclude the proof for the uniform small set property~\eqref{e:zR-uniform-small}.
\end{proof}

\subsection{Proof of Lemma~\ref{l:z-smallset}}\label{s:quantitative-ift-ift}

Lemma~\ref{l:z-smallset} is a direct application of the quantitative inverse and implicit function theorems, stated below. In particular, if a family of functions $f$ shares the same $C^2$-norm and Jacobian determinant at a point, these theorems provide estimates that are uniform across the family.

We first prove Lemma~\ref{l:z-smallset} using Lemma~\ref{l:quantitative-ift} and Lemma~\ref{l:quantitative-ift2}, and then reproduce the proofs of these two lemmas, carefully tracing how the constants depend on the $C^2$-norm and the Jacobian determinant of each function.

\begin{lemma}[Quantitative Inverse Function Theorem~\cite{Christ85}]\label{l:quantitative-ift}
Let $F\colon \R^d \to \R^d$ be a $C^2$ function. If we let $D, r$ be two positive constants such that
\begin{equation}\label{e:Drassumption}
    D \geq \norm{F}_{C^2} \quad\text{and}\quad r\leq \abs{\det DF(0)}\,,
\end{equation}
then there exists constants 
\begin{equation}\label{e:Cdefs}
    C_1 \defeq C(d)D^{-d} \quad\text{and}\quad \quad C_2\defeq C(d)D^{-2d+1}\,,
\end{equation}
such that the following holds.
\begin{enumerate}
    \item \label{i:one-to-one} $F$ is one-to-one on $B(0, C_1r)$.
    \item \label{i:inclusion} $F(B(0, C_1r)) \supset B(F(0), C_2r^2)$.
    \item \label{i:det-lb-ub} $\frac{1}{2}r \leq \abs{\det DF(x)} \leq \frac{3}{2}r$ on $B(0, C_1r)$. 
\end{enumerate}
\end{lemma}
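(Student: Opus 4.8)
To prove Lemma~\ref{l:quantitative-ift}, the plan is to reduce to the normalized case $DF(0)=\mathrm{Id}$ by postcomposing $F$ with the inverse of its linear part, and then run the classical contraction-mapping proof of the inverse function theorem while being careful about the single place where the power of $D$ is produced. First I would set $L:=DF(0)$ and $G:=L^{-1}\circ F$, so that $DG(0)=\mathrm{Id}$. From $\|F\|_{C^2}\le D$ one has $\|L\|_{\mathrm{op}}\le C(d)D$, and applying Hadamard's inequality to the cofactor formula $L^{-1}=(\det L)^{-1}\mathrm{adj}(L)$ together with the hypothesis $|\det L|\ge r$ from~\eqref{e:Drassumption} gives $\|L^{-1}\|_{\mathrm{op}}\le C(d)D^{d-1}/r$. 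Hence $\|D^2 G\|_\infty\le\|L^{-1}\|_{\mathrm{op}}\,\|D^2 F\|_\infty\le M$ with $M:=C(d)D^{d}/r$, and I would work on the ball $B(0,\rho)$ with $\rho:=c(d)/M=c(d)D^{-d}r=:C_1 r$, where $c(d)$ is a small dimensional constant (for concreteness $c(d)=1/(4d)$ divided by the constant in the cofactor bound will do); note that this $C_1$ has exactly the form in~\eqref{e:Cdefs}.

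Next, integrating $D^2 G$ along the segment from $0$ gives $\|DG(x)-\mathrm{Id}\|_{\mathrm{op}}\le M|x|\le c(d)$ for all $x\in B(0,\rho)$. With $c(d)=1/(4d)$ the singular values of $DG(x)$ lie in $[1-\tfrac{1}{4d},\,1+\tfrac{1}{4d}]$, so $\tfrac12\le|\det DG(x)|\le\tfrac32$; multiplying by $|\det L|$ and using $|\det L|\ge r$ yields part (iii) (the upper bound there being read with the normalization $r=|\det DF(0)|$, which is the case in the applications). For part (i), the fundamental theorem of calculus gives $|G(x_1)-G(x_2)-(x_1-x_2)|\le\bigl(\sup_{B(0,\rho)}\|DG-\mathrm{Id}\|_{\mathrm{op}}\bigr)\,|x_1-x_2|\le\tfrac12|x_1-x_2|$ for $x_1,x_2\in B(0,\rho)$, hence $|G(x_1)-G(x_2)|\ge\tfrac12|x_1-x_2|$; since $L$ is invertible, $F=L\circ G$ is one-to-one on $B(0,C_1 r)$.

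For part (ii) I would run the usual fixed-point solution of $F(x)=y$ in the normalized coordinates: for $y$ with $|y-G(0)|<\rho/2$, the map $T_y(x):=x+y-G(x)$ satisfies $\|DT_y\|_{\mathrm{op}}=\|\mathrm{Id}-DG\|_{\mathrm{op}}\le\tfrac12$ on $B(0,\rho)$ and $|T_y(0)|=|y-G(0)|<\rho/2$, so it is a $\tfrac12$-contraction of $\overline{B(0,\rho)}$ into itself, and its fixed point $x$ satisfies $G(x)=y$. Thus $G(B(0,\rho))\supset B(G(0),\rho/2)$, and applying the linear isomorphism $L$, which satisfies $L(B(a,\delta))\supset B(La,\delta/\|L^{-1}\|_{\mathrm{op}})$, gives $F(B(0,C_1 r))\supset B\bigl(F(0),(\rho/2)/\|L^{-1}\|_{\mathrm{op}}\bigr)\supset B(F(0),C_2 r^2)$ with $C_2=c(d)D^{-d}/(2C(d)D^{d-1})=C(d)D^{-2d+1}$, exactly as in~\eqref{e:Cdefs}.

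The argument is essentially textbook, so the only real content is getting the powers of $D$ right, and that bookkeeping is the point I expect to require the most care: $D$ enters solely through the single bound $\|L^{-1}\|_{\mathrm{op}}\le C(d)D^{d-1}/r$, used once in the linearization step — where it inflates $\|D^2 G\|_\infty$ to order $D^{d}/r$ and hence shrinks the radius $\rho$ by a factor of order $D^{-d}$, giving $C_1$ of order $D^{-d}$ — and once more when pushing $B(G(0),\rho/2)$ forward by $L$, which costs a further factor of order $D^{-(d-1)}$ on top of the extra $r$ already carried by $\rho$, giving $C_2$ of order $D^{-2d+1}$. One should also check that the single dimensional constant $c(d)$ can be taken small enough to simultaneously force $|\det DG|\in[\tfrac12,\tfrac32]$, the injectivity lower bound, and the contraction property; the choice $c(d)=1/(4d)$ (up to the cofactor constant) meets all three.
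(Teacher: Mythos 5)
Your proof is correct and recovers exactly the constants $C_1 = C(d)D^{-d}$ and $C_2 = C(d)D^{-2d+1}$ of the lemma, but it proceeds by a genuinely different route from the paper. You normalize by the linear part, setting $G = (DF(0))^{-1}\circ F$, push all the $D$-dependence into the single bound $\norm{DF(0)^{-1}}_{\mathrm{op}} \le C(d)D^{d-1}/r$ obtained from the adjugate formula, and then run the standard contraction-mapping proof of the inverse function theorem on the ball where $\norm{DG - \mathrm{Id}}_{\mathrm{op}}$ is small. The paper instead follows Christ's argument and never inverts $DF(0)$: it gets the quantitative lower bound $\abs{DF(x)v} \ge C(d)D^{-(d-1)}r\abs{v}$ directly from the inequality relating the smallest singular value of a matrix to $\abs{\det M}/\norm{M}_F^{d-1}$, proves injectivity by showing $\ip{z'(t), z'(0)} > 0$ along the image of a segment, and proves the inclusion $F(B(0,C_1r)) \supset B(F(0), C_2r^2)$ by integrating a vector field $X$ with $DF\,X = v$ rather than by a fixed-point iteration. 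The two mechanisms are quantitatively equivalent — your $1/\norm{L^{-1}}_{\mathrm{op}}$ and the paper's $C_4 r = C(d)D^{-(d-1)}r$ play identical roles, and each is used once to shrink the radius (giving $D^{-d}$) and once to push the image ball forward (giving the extra $D^{-(d-1)}$) — but your version is the more elementary, textbook one, and arguably makes the bookkeeping more transparent since $D$ enters at exactly one point. One caveat you correctly flag, and which applies equally to the paper's own proof: the upper bound $\abs{\det DF(x)} \le \tfrac32 r$ in item (iii) is only valid under the normalization $r = \abs{\det DF(0)}$ (the hypothesis $r \le \abs{\det DF(0)}$ alone gives only the lower bound), which is indeed how the lemma is invoked in the paper.
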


\begin{lemma}[Quantitative Implicit Function Theorem]\label{l:quantitative-ift2} 
Let $G:\R^{d_1+d_2}\to \R^{d_2}$ be a $C^2$ function. Suppose that there exist $x_0\in \R^{d_1}, y_0\in \R^{d_2}$ and $r > 0$, $D\geq 1$ such that
\begin{enumerate}
    \item \label{i:Gzero} $G(x_0, y_0)=0$.
    \item \label{i:Drassumption2}
        $ D \geq \norm{G}_{C^2} \quad\text{and}\quad r\leq \abs{\det D_y G(x_0,y_0)}\,. 
        $
\end{enumerate}

Then, there exists constant
\begin{equation}
    C_2\defeq C(d_1+d_2)D^{-2(d_1+d_2)+1}\,,
\end{equation}
such that for some $C^1$ function $H\colon B(x_0, C_2 r^2) \to \R^{d_2}$ and any $x\in B(x_0, C_2r^2)$, 
\begin{enumerate}
    \item \label{i:GHzero} $G(x, H(x))=0$.
    \item \label{i:detGH-lb} $\abs{\det D_y G(x, H(x))} \geq \frac{1}{2}r$.
\end{enumerate}
Here, $C(d_1+d_2)$ is a constant depending on the dimension $d_1+d_2$.
    
\end{lemma}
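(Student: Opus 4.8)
The plan is to derive the quantitative implicit function theorem from the quantitative inverse function theorem (Lemma~\ref{l:quantitative-ift}) by the classical device of completing $G$ to a map between equidimensional spaces. Concretely, I set $d \defeq d_1 + d_2$ and define $F \colon \R^{d} \to \R^{d}$ by
\[
  F(u, v) \defeq \bigl(u,\; G(x_0 + u,\, y_0 + v)\bigr)\,, \qquad (u,v)\in\R^{d_1}\times\R^{d_2}\,.
\]
By hypothesis~(\ref{i:Gzero}) we have $F(0,0) = (0, G(x_0,y_0)) = (0,0)$, and $DF(0,0)$ is block lower-triangular with the identity $I_{d_1}$ in the top-left block and $D_yG(x_0,y_0)$ in the bottom-right block, so $\abs{\det DF(0,0)} = \abs{\det D_yG(x_0,y_0)} \geq r$ by hypothesis~(\ref{i:Drassumption2}). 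Since the first component of $F$ is a coordinate projection, its derivatives of orders $1$ and $2$ contribute only a dimensional constant; together with $D \geq \norm{G}_{C^2}$ and $D \geq 1$ this gives a bound of the form $C(d)\,D$ on the $C^2$-data of $F$ that enters Lemma~\ref{l:quantitative-ift}. Hence $F$ satisfies assumption~\eqref{e:Drassumption} of Lemma~\ref{l:quantitative-ift} with $C(d)D$ in place of $D$ and with the same $r$.

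Applying Lemma~\ref{l:quantitative-ift} then produces a radius $\rho \defeq C(d)D^{-d} r$ such that $F$ is one-to-one on $B(0,\rho)$, the image $F(B(0,\rho))$ contains $B(F(0),\, C(d)D^{-2d+1} r^2) = B(0, C_2 r^2)$ with $C_2 = C(d)D^{-2d+1}$, and $\tfrac12 r \leq \abs{\det DF} \leq \tfrac32 r$ on $B(0,\rho)$; here each occurrence of $C(d)$ has absorbed the extra $C(d)D$ factor, so the exponents of $D$ are exactly as claimed. In particular $DF$ is invertible throughout $B(0,\rho)$, so $F$ restricts to a $C^1$ diffeomorphism of $B(0,\rho)$ onto its open image, with $C^1$ inverse $F^{-1}$ defined at least on $B(0, C_2 r^2)$.

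To construct $H$, fix $x \in B(x_0, C_2 r^2)$ and write $u = x - x_0$, so that $(u,0) \in B(0, C_2 r^2) \subset F(B(0,\rho))$; let $(u', v') \defeq F^{-1}(u, 0) \in B(0,\rho)$ be its unique preimage. Comparing first components in $F(u',v') = (u,0)$ forces $u' = u$, and the second component gives $G(x_0 + u,\, y_0 + v') = 0$. I then define $H(x) \defeq y_0 + v' = y_0 + \pi_2\bigl(F^{-1}(x - x_0,\, 0)\bigr)$, where $\pi_2$ denotes the projection onto the last $d_2$ coordinates; this is $C^1$ as a composition of an affine map, the $C^1$ map $F^{-1}$, and a linear projection, and by construction it satisfies conclusion~(\ref{i:GHzero}). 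For conclusion~(\ref{i:detGH-lb}), the same block-triangular structure shows $\det DF(u', v') = \det D_yG(x_0 + u', y_0 + v') = \det D_yG(x, H(x))$; since $(u',v') \in B(0,\rho)$, part~(\ref{i:det-lb-ub}) of Lemma~\ref{l:quantitative-ift} yields $\abs{\det D_yG(x, H(x))} \geq \tfrac12 r$.

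Since the argument is a direct reduction, there is no substantial obstacle; the only point requiring mild care is the bookkeeping of the $C^2$-size of the auxiliary map $F$, whose projection block is not globally bounded in $C^0$. This is harmless: the constants in Lemma~\ref{l:quantitative-ift} depend on $F$ only through bounds on its first and second derivatives (as the forms of $C_1, C_2$ and the proof reproduced below make clear), and these are controlled by $C(d)D$; alternatively, one may simply restrict $F$ to a fixed ball before invoking Lemma~\ref{l:quantitative-ift}. Tracking this constant through the reduction yields $C_2 = C(d_1+d_2)\,D^{-2(d_1+d_2)+1}$, as claimed.
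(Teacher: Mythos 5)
Your argument is essentially identical to the paper's proof: both complete $G$ to the equidimensional map $(x,y)\mapsto(x,G(x,y))$ (you simply translate so the base point is the origin), apply Lemma~\ref{l:quantitative-ift} at that point, and define $H$ via the projection of the local inverse onto the last $d_2$ coordinates, reading off conclusion~(\ref{i:detGH-lb}) from item~(\ref{i:det-lb-ub}) of the inverse function theorem. The one place you are actually a touch more careful than the paper is the $C^2$-bound on the auxiliary map: the paper just writes $\norm{\tilde G}_{C^2}\le\max(1,\norm{G}_{C^2})\le D$, whereas you correctly observe that the projection block is not globally $C^0$-bounded and that the bound one really needs is on the derivative data entering Lemma~\ref{l:quantitative-ift}, which is controlled by $C(d)D$ and gets absorbed into the dimensional constant without changing the exponent of $D$.
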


\begin{proof}[Proof of Lemma~\ref{l:z-smallset}]
Define a map $G\colon \T ^{(2), 2}\times \T^4 \to \T^{(2), 2}$ such that $G(z, \zeta) = \Phi_4^{(2)}(z, \zeta)-\Phi_4^{(2)}(w, \xi)$.
Then clearly, $G(w, \xi)=0$ holds. We choose $D=CA^7, r=s$ so that~\eqref{e:dzeta}--\eqref{e:dxdzeta}, and the assumption (\ref{i:wxi-lb}) imply the condition (\ref{i:Drassumption2}) of Lemma~\ref{l:quantitative-ift2} at $(x_0, y_0)=(w, \xi)$. Then Lemma~\ref{l:quantitative-ift2} implies that there exists a ball $B_1\defeq B(w, CA^{-105}s^2)$ and a $C^1$-function $H\colon B_1 \to \T^4$ such that for any $z \in B_1$,
\begin{equation}\label{e:det-lb}
    \Phi_4^{(2)}(z,H(z))=\Phi_4^{(2)}(w, \xi) \overset{(\ref{i:wxi-fp})}{=} w \quad\text{and}\quad \abs{\det D_\zeta G(z, H(z))}\geq \frac{1}{2}s\,.
\end{equation}

Now, we fix $z\in B_1$ and define a map $F= \Phi_4^{(2)}(z, \cdot+H(z))\colon \T^4 \to \T^4$. We choose $D=CA^7$ and $r=\frac{1}{2}s$ so that~\eqref{e:dzeta}, \eqref{e:dzetadzeta}, and~\eqref{e:det-lb} imply~\eqref{e:Drassumption}, and hence by Lemma~\ref{l:quantitative-ift}, there exists some constants $C, C'$ independent of $z, w, \xi$,  and $A$, such that
\begin{enumerate}
    \item $F$ is one-to-one on $B_2\defeq B(0, CA^{-28}s)$.
    \item $F(B_2) \supset B_3\defeq B(F(0), C'A^{-49}s^2)=B(w, C'A^{-49}s^2)$.
    \item $\abs{\det DF(x)} \leq \frac{3}{4}s$ on $B_2$.
\end{enumerate}
Then,
\begin{align}
    P_A^{(2), 2}(z,E) &= \int_{\T^4} \one_E(F(\zeta)) d\zeta \geq \int_{B_2}\one_E(F(\zeta))d\zeta = \int_{F(B_2)}\one_E(y) \abs{\det D_y F^{-1}(y)} dy\\
    &\geq \int_{B_3} \one_E(y) \abs{\det D_\zeta F(F^{-1}(y))}^{-1} dy \\
    &\geq \frac{4}{3}s^{-1}\Leb|_{B_3}(E)\,.
\end{align}
Decreasing $C$ or $C'$ if necessary, we complete the proof.
\end{proof}

We complement the quantitative inverse function theorem in Section 8 of~\cite{Christ85} 
by explicitly expressing the constants $C_1$ and $C_2$ in terms of $D$. 
For the reader's convenience, we reproduce the proof of the lemma below.

\begin{proof}[Proof of Lemma~\ref{l:quantitative-ift}]
    Throughout the proof, $C(d)$ represents a generic constant whose value may vary from line to line but depend only on the dimension $d$.

    We start by proving the item (\ref{i:det-lb-ub}).
    By the assumption~\eqref{e:Drassumption}, we have
    \begin{equation}
        \norm{D_x \det DF(x)}_\infty \leq C(d) \norm{D_x^2 F}_\infty \norm{D_x F}_\infty^{d-1} \leq C(d)D^d\,.   
    \end{equation}
    Combining this with the property of $r$ in~\eqref{e:Drassumption} and the definition of $C_1$ in~\eqref{e:Cdefs} yields the item (\ref{i:det-lb-ub}). 
    Next we prove item (\ref{i:one-to-one}). We first note that for any matrix $M\in \R^{d\times d}$ and vector $w\in \R^d$, 
    \begin{equation}
        \abs{Mw} \geq (d-1)^\frac{d-1}{4}\frac{\abs{\det M}}{\norm{M}_F^{d-1}}\abs{w}\,,
    \end{equation}
    where $\norm{\cdot}_F$ is the Frobenius norm (See for instance~\cite{YishengDunhe97}).
    Then the lower bound of $\abs{\det DF(x)}$ in the item (\ref{i:det-lb-ub}) implies that for any $x\in B(0, C_1r)$ and a vector $v\in \R^d$,
    \begin{equation}\label{e:dfv-lb}
        \abs{DF(x) v} \geq (d-1)^\frac{d-1}{4}\frac{\abs{\det DF(x)}}{\norm{DF(x)}_F^{d-1}}\abs{v} \geq C_4 r\abs{v}\,,
    \end{equation}
    where
    \begin{equation}\label{e:C4def}
        C_4\defeq C(d)D^{-(d-1)}\,.    
    \end{equation}
    We let $x_1, x_2 \in B(0, C_1r)$ and define $z(t)\defeq F(x_1+t(x_2-x_1)), t\in [0,1]$. Then $z'(t)=DF(x_1+t(x_2-x_1))(x_2-x_1)$ and 
    \begin{align}
        \ip{z'(t) , z'(0)} &\geq \abs{z'(0)}^2 - D\abs{x_2-x_1}^2\abs{z'(0)}\\
        &\overset{\eqref{e:dfv-lb}}{\geq} (C_4r-D\abs{x_2-x_1})\abs{x_2-x_1}\abs{z'(0)}\\
        &\geq (C_4-2DC_1)r\abs{x_2-x_1}\abs{z'(0)} > 0\,,
    \end{align}
    provided $C_1 \leq \frac{1}{2}C_4D^{-1}$. This implies
    \begin{equation}
        \ip{z(1)-z(0), z'(0)}  > 0\,,
    \end{equation}
    and hence $z(1)\neq z(0)$, which completes the proof for item (\ref{i:one-to-one}).
    Now it remains to prove item (\ref{i:inclusion}). We suppose without loss of generality that $F(0)=0$. We fix $v\in S^{d-1}$ and define the vector field $X(x)$ on $B(0, C_1r)$ such that $DF(x)X(x)=v$. Then by the property~\eqref{e:dfv-lb}, 
    \begin{equation}\label{e:Xbound}
        \abs{X(x)}\leq C_4^{-1}r^{-1}\,.   
    \end{equation}
    We consider the ODE
    \begin{equation}
        z'(t) = X(z(t))\,, \quad z(t)\in B(0, C_1 r)\,, \quad z(0)=0\,.
    \end{equation}
    The boound~\eqref{e:Xbound} on $X$ implies that $z(t)$ is well-defined on time $0\leq t\leq C_1C_4r^2$, and also the definition of $X$ implies that
    \begin{equation}
        \frac{d}{dt}F(z(t)) = DF(z(t))X(z(t))=v\,.
    \end{equation}
    Thus,
    \begin{equation}
        F(z(t))=vt\,, \quad t\in [0, C_1C_4r^2]\,.
    \end{equation}
    This implies $C_1C_4r^2 v \in F(B(0, C_1r))$ and hence, using the definitions~\eqref{e:Cdefs} and \eqref{e:C4def} of $C_1$ and $C_4$, respectively, we conclude the proof for the item (\ref{i:inclusion}).
\end{proof}

By applying the quantitative inverse function theorem, we also reproduce the proof of the implicit function theorem, carefully tracing the dependence of the constants in the classical proof.

\begin{proof}[Proof of Lemma~\ref{l:quantitative-ift2}]
    We define a function 
    \begin{equation}
        \Tilde G (x,y)\colon \R^{d_1+d_2} \to \R^{d_1+d_2}, (x,y)\mapsto (x, G(x,y))\,.   
    \end{equation}
    Then by assumption (\ref{i:Drassumption2}), $\norm{\tilde G}_{C^2} \leq \max(1, \norm{G}_{C^2}) \leq D$ and $\abs{\det D\tilde G (x_0, y_0)} = \abs{\det D_y G (x_0, y_0)}\geq r > 0$. This satisfies the assumption for Lemma~\ref{l:quantitative-ift} so that there exist constants $C_1(D)=C(d_1+d_2)D^{-(d_1+d_2)}$ and $C_2(D)=C(d_1+d_2)D^{-2(d_1+d_2)+1}$ such that
    $\tilde G$ is a $C^1$ diffeomorphism between $V$ and $W$, where 
    \begin{align}
    W&\defeq B(\tilde G(x_0, y_0), C_2(D)r^2)\overset{(\ref{i:Gzero})}{=}B((x_0, 0), C_2(D)r^2)\,,\\
    V&\defeq\tilde G^{-1}(W)\cap B((x_0, y_0), C_1(D)r)\,.
    \end{align}
    Moreover, item (\ref{i:det-lb-ub}) implies 
    \begin{equation}\label{e:detDGtilde-lb}
        \abs{\det D\tilde G (x,y)} = \abs{\det D_y G(x,y)} \geq \frac{1}{2}r\,, \quad\forall (x,y) \in V\,.
    \end{equation}
    We define $U\defeq \set{x\in \R^{d_1}\st (x,0)\in W}$ and $H\colon U \to \R^{d_2}, x \mapsto \pi_{d_2}(\tilde G^{-1} (x,0))$, where $\pi_{d_2}:\R^{d_1+d_2} \to \R^{d_2}, (z_1, z_2, \ldots, z_{d_1+d_2})\mapsto (z_{d_1+1}, \ldots, z_{d_1+d_2})$ is the projection onto the last $d_2$ coordinates. Then $U$ is an open neighborhood of $x_0$ in $\R^{d_1}$ and in particular, $B(x_0, C_2(D)r^2) \subset U$. Also, $\tilde G^{-1}$ is $C^1$ on $W$ so that $H$ is $C^1$, and
    \begin{equation}
        \set{(x, H(x))\st x\in U} = \set{(x,y) \in V\st G(x,y)=0}\,.
    \end{equation}
    Then since $\set{(x,H(x))\st x\in B(x_0, C_2(D)r^2)} \subset \set{(x,H(x))\st x\in U}$, $(x,H(x))\in V$ and $G(x,H(x))=0$. Combining this with~\eqref{e:detDGtilde-lb} concludes the proof for both the item (\ref{i:GHzero}) and (\ref{i:detGH-lb}). 
\end{proof}

\section{Proof of Lemma~\ref{l:Lyapunov-drift} and~\ref{l:Harris-conditions}}\label{s:verify-Harris}

In this section, we prove the Harris conditions, namely Lemma~\ref{l:Lyapunov-drift} and Lemma~\ref{l:Harris-conditions}. 
In Section~\ref{s:off-diagonal-small}, we first establish in Lemma~\ref{l:final-small-set} 
that the off-diagonal set is a small set, with explicit constants that depend on the amplitude~\(A\). 
Then, in Section~\ref{s:proof-of-harris-lemma}, we prove Lemma~\ref{l:Lyapunov-drift} 
by adapting results from~\cite{CoopermanIyerSon25}, 
and deduce Lemma~\ref{l:Harris-conditions} 
by combining this with the off-diagonal small set property established in Lemma~\ref{l:final-small-set}.

\subsection{Small Set Property of Off-Diagonal Sets}\label{s:off-diagonal-small}

The main goal of this subsection is to establish the small set property of the off-diagonal set, 
as stated in Lemma~\ref{l:final-small-set}. 

Before presenting the lemmas and proofs, we outline the main idea behind Lemma~\ref{l:final-small-set} 
and explain the role of the intermediate lemmas. To prove the small set property, 
we first show that any sufficiently separated two-point configuration $z = (x, y) \in \T^{2,(2)}$ 
can reach the special set $R$, defined in~\eqref{e:Rdef}, in $N_1(z)$ steps (Lemma~\ref{l:smallset-to-R1}). Using a uniform property of $R$ 
(Lemma~\ref{l:R1-uniform-small}), we then show that any point $z' = (x', y')$ near $R$ 
can reach a fixed small ball $S = B(z_*, r_1)$ in $N_2(x')$ steps (Lemma~\ref{l:R_1-small}). 
Since $S$ admits a minorizing measure supported on a concentric ball $S' = B(z_*, r_2)$, 
there is a positive probability for the chain starting in $S$ to return to $S$. 

Combining these steps, for any configuration in the sublevel set of the Lyapunov function $V$, 
there exists a trajectory that enters and remains in $S$ with positive probability until another trajectory 
enters $S$. After at most $\max_{z, x'} (N_1(z) + N_2(x'))$ steps, we can thus ensure 
that some trajectories starting from the sublevel set are contained in the common ball $S$ 
with positive probability. Applying the small set property of $S$ then 
completes the proof of Lemma~\ref{l:final-small-set}.

We begin by showing that any sufficiently separated two-point configuration \( z \) can reach \( R \) in finite time with positive probability. To state this precisely, let \( r_1 \) be as in Lemma~\ref{l:R1-uniform-small} and define the neighborhood \( R_1 \) of \( R \) by
\begin{equation}\label{e:R1-def}
    R_1 \defeq \bigcup_{z \in R} B_\infty(z, r_1)\,.    
\end{equation}
We also define the near-diagonal set in $\T^{2,(2)}$ as
\begin{equation}\label{e:on-diagonal-def}
    \Delta(s) \defeq \set{(x, y) \in \T^2 \times \T^2 \st 0 < d_{\T^2}(x, y) < s}\,.
\end{equation}

\begin{lemma}\label{l:smallset-to-R1}
    Let $s_*$ be an $A$-independent constant, $z\in \Delta(A^{-2}s_*)^c$, and $r_1$ be as in Lemma~\ref{l:R1-uniform-small}. Then there exists $N_1(z)\in \N$ and $\xi^{N_1}(z)\in \T^{2N_1}$ such that
    \begin{equation}\label{e:zeta-to-smallset}
        \Phi_{2N_1}^{(2)}(z, \xi^{N_1}) \in R\,.
    \end{equation}
    Moreover, for sufficiently large $A$, 
    \begin{gather}
        \label{e:time-to-smallset-ub}
        \sup_{z\in \Delta(A^{-2}s_*)^c} N_1(z) \leq \frac{6\pi}{A\sin(A^{-2}s_*)}\,,\\
        \label{e:to-smallset-lb}
        \inf_{z\in \Delta(A^{-2}s_*)^c} P_A^{(2), N_1}(z, B_\infty(\Phi_{2N_1}^{(2)}(z, \xi^{N_1}), r_1)) \geq p_A\,,
    \end{gather}
    where $p_A =  A^{-CA^2}$ for some $A$-independent constnat $C>0$.
\end{lemma}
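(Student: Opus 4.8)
The goal is to show that from any two-point configuration $z=(x,y)$ that is not near-diagonal (at scale $A^{-2}s_*$), we can steer the two particles, using a deterministic sequence of shifts $\xi^{N_1}$, so that after $2N_1$ steps the pair lands exactly on the set $R=\{y-x=(\pi,\pi)\}$, with $N_1$ bounded by $O\!\big(A^{-1}/\sin(A^{-2}s_*)\big)$, and with transition probability at least $p_A=A^{-CA^2}$ to a small $\ell^\infty$-ball around the target. The strategy is to treat the displacement $\delta=y-x\in\T^2$ as the object to be controlled: since both particles feel the same shear at each half-step (the shear depends only on one coordinate, and the velocity is applied to both $x$ and $y$), one can compute how $\delta$ evolves and design the shifts $\zeta_j$ to drive $\delta_1$ and then $\delta_2$ toward $\pi$. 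I would first do this at the level of the exact (deterministic) flow to produce $\xi^{N_1}(z)$ and establish~\eqref{e:zeta-to-smallset}, then upgrade to the probabilistic statement~\eqref{e:to-smallset-lb} by a perturbation/continuity argument showing the conclusion is robust to $O(r_1)$-perturbations of the shifts, which happen with probability at least $(r_1/2\pi)^{2N_1}$.

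\textbf{Step 1: Evolution of the displacement under one pair of shears.} During $t\in[2n,2n+1)$ the map is $x\mapsto x+A\sin(x_2-\zeta)\,e_1$ (time-$1$ flow of a shear in the $x_1$ direction, which is exact because $x_2$ is conserved on this interval). Hence after this half-step, writing $x=(x_1,x_2)$, $y=(y_1,y_2)$, the second coordinates are unchanged and
\[
    (y_1-x_1)\ \longmapsto\ (y_1-x_1)+A\big(\sin(y_2-\zeta)-\sin(x_2-\zeta)\big)
    = (y_1-x_1)+2A\cos\!\Big(\tfrac{y_2+x_2}{2}-\zeta\Big)\sin\!\Big(\tfrac{y_2-x_2}{2}\Big).
\]
So in one horizontal half-step I can change $\delta_1=y_1-x_1$ by any amount in an interval of half-width $2A\,\big|\sin(\delta_2/2)\big|$ (by choosing $\zeta$), leaving $\delta_2$ fixed; symmetrically, a vertical half-step changes $\delta_2$ by up to $2A\,|\sin(\delta_1/2)|$, leaving $\delta_1$ fixed. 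The hypothesis $z\notin\Delta(A^{-2}s_*)^{\,}$\,---\,more precisely $z\in\Delta(A^{-2}s_*)^c$, meaning the particles are separated by at least $A^{-2}s_*$ on $\T^2$\,---\,guarantees that at least one of $|\sin(\delta_1/2)|$, $|\sin(\delta_2/2)|$ is $\gtrsim A^{-2}s_*$, so at least one coordinate of $\delta$ can be moved by an amount of order $A\cdot A^{-2}s_*=A^{-1}s_*$ per half-step.

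\textbf{Step 2: A two-phase steering scheme and the bound on $N_1$.} First handle the coordinate that is "mobile": say $|\sin(\delta_2/2)|\gtrsim A^{-2}s_*$, so horizontal half-steps can move $\delta_1$. Using a sequence of horizontal half-steps (each advancing $\delta_1$ by a controlled increment, never overshooting, while leaving $\delta_2$ untouched), drive $\delta_1$ to $\pi$; this takes at most $\sim 2\pi/(A\sin(A^{-2}s_*))$ half-steps since $\delta_1$ lives on a circle of circumference $2\pi$ and each step covers at least $\gtrsim A^{-1}\sin(A^{-2}s_*)$. Once $\delta_1=\pi$ we have $|\sin(\delta_1/2)|=1$, so now vertical half-steps move $\delta_2$ by up to $2A$ per step and leave $\delta_1=\pi$ fixed; drive $\delta_2$ to $\pi$ in at most $\lceil 2\pi/(2A)\rceil$ further half-steps. (The first phase is the expensive one, accounting for the $6\pi/(A\sin(A^{-2}s_*))$ bound in~\eqref{e:time-to-smallset-ub}, with room to spare for the interleaving of half-steps into full time-$2$ steps and for the final cleanup phase.) The required shifts are explicit solutions of $\cos\big(\tfrac{y_2+x_2}{2}-\zeta\big)=(\text{target increment})/(2A\sin(\delta_2/2))$ etc., which defines $\xi^{N_1}(z)$; since the flow map at the end is exactly $z\mapsto$ (a point with $\delta=(\pi,\pi)$, hence in $R$), \eqref{e:zeta-to-smallset} holds on the nose.

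\textbf{Step 3: From deterministic steering to the probability lower bound.} For~\eqref{e:to-smallset-lb}, observe that the flow map $\zeta^{N_1}\mapsto\Phi^{(2)}_{2N_1}(z,\zeta^{N_1})$ is Lipschitz in the shifts with constant at most $(CA)^{N_1}$ (each half-step is $CA$-Lipschitz in its shift and $CA$-Lipschitz in the input point, and we compose $2N_1$ of them; here $C$ absorbs the derivative $A\cos(\cdot)$ of the shear). Hence if $\underline\zeta^{N_1}$ lands in the box $\prod_j B(\xi^{N_1}_j, \rho)$ with $\rho$ chosen so that $(CA)^{N_1}\rho\le r_1$, then $\Phi^{(2)}_{2N_1}(z,\underline\zeta^{N_1})\in B_\infty(\Phi^{(2)}_{2N_1}(z,\xi^{N_1}),r_1)$. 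Since the $\zeta_j$ are i.i.d.\ uniform on $\T$, this box has probability $(\rho/\pi)^{2N_1}$. Taking $\rho = r_1 (CA)^{-N_1} = C_1 A^{-95}(CA)^{-N_1}$ and using $N_1\le 6\pi/(A\sin(A^{-2}s_*)) \le C' A^3/s_*$ (for large $A$, since $\sin(A^{-2}s_*)\ge A^{-2}s_*/2$) gives a probability bounded below by $\big(A^{-95}(CA)^{-N_1}/C\big)^{2N_1}\ge A^{-C''A^3\log A}$; absorbing the $\log A$ and the polynomial factors into the exponent yields $p_A\ge A^{-CA^2}$ after re-optimizing constants --- \emph{here one must be slightly careful}, since the crude bound gives $A^{-C N_1^2}$-type decay with $N_1\sim A^3$, which is worse than $A^{-CA^2}$.

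\textbf{Main obstacle.} The delicate point is precisely the exponent bookkeeping in Step 3: the naive Lipschitz-composition estimate loses a factor $(CA)^{N_1}$ which, with $N_1\sim A^3$, would only give $p_A\ge A^{-CA^3\log A}$, not $A^{-CA^2}$. To recover the stated $p_A=A^{-CA^2}$ one should not perturb \emph{all} shifts freely; instead, exploit that the steering scheme in Step 2 is \emph{stable} coordinate-by-coordinate --- a perturbation of size $r_1$ in each shift only perturbs the running displacement by $O(A r_1)$ at each step, and because the scheme can be made \emph{self-correcting} (at each step re-solve for $\zeta_j$ given the actual current displacement, which is what the Markov kernel really does when we condition on $\underline\zeta^{j}\in$ small boxes sequentially), the errors do not compound geometrically but only additively, costing $O(N_1 A r_1)\le r_1$ after re-choosing $C_1$ small. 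Under this corrected scheme the probability is $\prod_{j=1}^{2N_1}\P[\zeta_j\in I_j]$ where each $I_j$ has length $\gtrsim r_1/A$ (the preimage under $\zeta\mapsto A\cos(\cdot-\zeta)$ of an interval of length $r_1$, using that we stay away from the critical points of $\cos$ by the $|\sin(\delta_i/2)|\gtrsim A^{-2}s_*$ lower bound), so the probability is $\ge (c r_1/A)^{2N_1}\ge (cA^{-96})^{2N_1}\ge A^{-CN_1\log A}\ge A^{-CA^2}$ once we observe $N_1\lesssim A/\sin(A^{-2}s_*)$ is in fact $\lesssim A^3$ but the $\log A$ can be absorbed --- and if even this is too lossy, one sharpens Step 2 so that the first phase uses increments of the maximal size $\sim A\cdot A^{-2}s_* = A^{-1}s_*$ only until $|\sin(\delta_1/2)|$ becomes $\Omega(1)$, after which steps become $O(1)$-many, trimming $N_1$ to $O(A^{-1}\log(A^2/s_*)/ (A^{-2}s_*)) = O(A\log A / s_*)$; this is the version that cleanly yields $p_A\ge A^{-CA^2}$, and I expect the paper's proof to make exactly this refinement. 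The inequality $\sin(A^{-2}s_*)\ge \tfrac{2}{\pi}A^{-2}s_*$ for large $A$ is what makes all the "for sufficiently large $A$" qualifiers go through.
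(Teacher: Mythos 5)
Your Steps 1--2 reconstruct from scratch the steering argument that the paper simply imports from Lemmas 5.6 and 5.7 of \cite{BlumenthalCotiZelatiEA23}; the displacement computation $\delta_1 \mapsto \delta_1 + 2A\cos(\tfrac{y_2+x_2}{2}-\zeta)\sin(\tfrac{\delta_2}{2})$ and the two-phase scheme are a faithful sketch of what those lemmas provide, so that part is sound in spirit even if the paper does not reprove it.

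The problem is Step~3, where you write down the correct argument and then talk yourself out of it because of an arithmetic slip. You evaluate $6\pi/(A\sin(A^{-2}s_*))$ as $O(A^3/s_*)$, but since $\sin(A^{-2}s_*)\geq \tfrac12 A^{-2}s_*$ for large $A$, one has $A\sin(A^{-2}s_*)\geq \tfrac12 A^{-1}s_*$ and hence $N_1 \leq 12\pi A/s_* = O(A)$, not $O(A^3)$. With $N_1=O(A)$ the ``crude'' Lipschitz-composition bound is exactly the paper's proof and already closes the argument: by the Jacobian estimate~\eqref{e:dzeta}, perturbing the shifts within $B_\infty(\xi^{N_1}, A^{-2N_1}r_1)$ moves the endpoint by at most $r_1$, and the probability of that event is $(A^{-2N_1}r_1)^{2N_1}$, whose dominant factor is $A^{-4N_1^2}=A^{-O(A^2)}$, i.e.\ precisely $p_A=A^{-CA^2}$. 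In other words, the claimed $p_A$ is \emph{designed} to absorb the geometric compounding you worry about; there is no need to avoid it. Your replacement ``self-correcting'' scheme and the ``sharpened Step~2'' are therefore unnecessary detours, and as written they do not actually close the gap you perceived: you assert $(cr_1/A)^{2N_1}\geq A^{-CN_1\log A}\geq A^{-CA^2}$ while simultaneously taking $N_1\lesssim A^3$, which is inconsistent, and the conditional-resolving construction changes the event being bounded (it is no longer a fixed box in $\T^{2N_1}$) without a careful Markov-property justification. Fix the estimate $N_1=O(A)$ and keep your original box-perturbation argument; that is the proof.
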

\begin{proof}
    Lemma 5.6 and Lemma 5.7 in~\cite{BlumenthalCotiZelatiEA23} implies the existence of $N_1(z)$ and $\xi^{N_1}(z)$ such that~\eqref{e:zeta-to-smallset} holds and also the bound~\eqref{e:time-to-smallset-ub} for $N_1(z)$.
    To prove~\eqref{e:to-smallset-lb}, we use~\eqref{e:dzeta} to see that if $\underline{\zeta}^{N_1} \in B_{\infty}(\xi^{N_1}, A^{-2N_1}r_1)$, then
    \begin{equation}
        \abs[\Big]{\Phi_{2N_1}^{(2)}(z,\xi^{N_1}) - \Phi_{2N_1}^{(2)}(z,\underline{\zeta}^{N_1})}_\infty \leq A^{2N_1} \abs{\xi^{N_1} - \underline{\zeta}^{N_1}}_\infty \leq r_1\,.
    \end{equation}
    This implies that
    \begin{align}
        \P\brak[\Big]{\Phi_{2N_1}^{(2)}(z,\underline{\zeta}^{N_1})\in B_\infty(\Phi_{2N_1}^{(2)}(z, \xi^{N_1}), r_1)} &\geq \P\brak[\Big]{\underline{\zeta}^{N_1} \in B_{\infty}(\xi^{N_1}, A^{-2N_1}r_1)}\\
        &=(A^{-2N_1}r_1)^{2N_1} \overset{\eqref{e:time-to-smallset-ub}}{\geq} p_A\,.
    \end{align}
\end{proof}
After reaching the neighborhood $R_1$ of $R$, we next show that the two-point configuration can reach a fixed small set with a probability that can be explicitly traced in terms of $A$.

To define this fixed small set precisely, we denote the special point
\begin{equation}\label{e:z_*-def}
    z_* \defeq ((0, 0), (\pi, \pi)) \in R\,.
\end{equation}

\begin{lemma} \label{l:R_1-small}
Let $r_1$ be as in Lemma~\ref{l:R1-uniform-small}. Then, for any $z'\in R_1$ and $z=(x,y)\in R$ such that $z' \in B_\infty(z, r_1)$, we have
    \begin{equation}\label{e:R-to-smallset}
        P_A^{(2), 2N_2(x)}(z', B_\infty(z_*, r_1)) \geq (c_2r_1^4)^{N_2(x)}\,,\quad\text{where}\quad N_2(x)=\ceil{\abs{x}/r_1}\,.
    \end{equation}

\end{lemma}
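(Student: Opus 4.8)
The plan is to show that from any $z' \in B_\infty(z, r_1)$ with $z = (x,y) \in R$, one can steer the two-point chain back into a configuration in $R$ that is closer to $z_* = ((0,0),(\pi,\pi))$, reducing $\abs{x}$ by a fixed amount $\sim r_1$ at each two-step block, while keeping a quantitatively controlled probability. After $N_2(x) = \ceil{\abs{x}/r_1}$ such blocks we land in $B_\infty(z_*, r_1)$, and the probabilities multiply.

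First I would record the key structural fact used already in the proof of Lemma~\ref{l:R1-uniform-small}: for $z = (x,y) \in R$ (so $y - x = (\pi,\pi)$), the choice $\xi^1(z) = (x_2, x_1)$ gives $\Phi_2^{(2)}(z, \xi^1(z)) = z$, and more generally the flow acts on $R$ by translation in a way that lets one prescribe the image. Concretely, for $z \in R$ and any $\xi \in \T^2$, the point $\Phi_2^{(2)}(z, \xi + \xi^1(z))$ again lies in $R$ (its two components still differ by $(\pi,\pi)$), and its $x$-component can be moved; by choosing $\xi$ of size $\sim r_1/A^2$ (using the Lipschitz bound~\eqref{e:dzeta} for $\Phi_2^{(2)}$ in the shift variable, combined with the fact that the $\zeta$-derivative at the fixed point is nondegenerate of size $\sim A$, as in~\eqref{e:det-dzeta-z*}), I can arrange $\Phi_2^{(2)}(z', \underline{\zeta}^1) \in R$ with $x$-component reduced in norm by at least $r_1$ (or, once $\abs{x} \le r_1$, brought to $0$), for all $z'$ in $B_\infty(z, r_1)$ — the closeness of $z'$ to $z$ only perturbs things by $\sim A^{-93}$ which is negligible against $r_2 \sim A^{-37}$. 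This is essentially the uniform-in-$z$ control already exploited via~\eqref{e:dzeta-same}.

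Next I would run the induction. Set $z^{(0)} = z'$, $x^{(0)} = x$. Given $z^{(k)} \in B_\infty(z_k, r_1)$ for some $z_k = (x^{(k)}, x^{(k)} + (\pi,\pi)) \in R$ with $\abs{x^{(k)}} \le \abs{x} - k r_1$, the previous paragraph gives a set of shifts $\underline{\zeta} \in \T^2$ of Lebesgue measure $\sim (c A^{-2} r_1)^2$ — and here one should use the uniform small set estimate of Lemma~\ref{l:R1-uniform-small} rather than a bare Lipschitz argument, since that lemma already packages exactly the statement "$P_A^{(2),2}$ from a neighborhood of $R$ dominates $c_2 \,\Leb|_{B_\infty(\cdot, r_2)}$", and a ball of radius $r_1 \ll r_2$ landing at the desired next center has $P_A^{(2),2}$-mass at least $c_2 r_1^4$ — such that $\Phi_4^{(2)}(z^{(k)}, \cdot)$ lands in $B_\infty(z_{k+1}, r_1)$ with $\abs{x^{(k+1)}} \le \abs{x} - (k+1) r_1$. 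By the Markov property, multiplying these $N_2(x)$ lower bounds of $c_2 r_1^4$ each yields $P_A^{(2), 2N_2(x)}(z', B_\infty(z_*, r_1)) \ge (c_2 r_1^4)^{N_2(x)}$, which is the claim.

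The main obstacle is making precise the claim that the image center can be prescribed with the stated uniformity: one must verify that the map $\underline{\zeta}^1 \mapsto \Phi_2^{(2)}(z', \underline{\zeta}^1)$, restricted to the submanifold that keeps the output in $R$, genuinely covers a ball of radius $\gtrsim r_1$ around the target center $z_{k+1}$, uniformly over $z' \in B_\infty(z_k, r_1)$ and over all admissible centers $z_k$. This is where Lemma~\ref{l:R1-uniform-small} does the heavy lifting: its minorizing measure is $\Leb|_{B_\infty(z, r_2)}$ with $r_2 = C_2 A^{-37} \gg r_1 = C_1 A^{-95}$, so the target ball $B_\infty(z_{k+1}, r_1)$ sits inside the support with room to spare, and the invoked $c_2$ is $A$-independent up to its $A^{-196}$ factor — exactly matching the $(c_2 r_1^4)^{N_2(x)}$ in~\eqref{e:R-to-smallset}. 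Everything else — the bound $N_2(x) = \ceil{\abs{x}/r_1}$ and the geometric decrease of $\abs{x^{(k)}}$ — is then bookkeeping, using only that $\abs{x} \le \sqrt{2}\pi$ is bounded and that each block removes at least $r_1$ from the $x$-norm until it reaches $0$.
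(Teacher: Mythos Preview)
Your final argument---chain $B_\infty(z_k,r_1)$-balls with centers $z_k\in R$ along a path from $z$ to $z_*$, apply Lemma~\ref{l:R1-uniform-small} with $n=2$ at each step to get a $c_2 r_1^4$ lower bound on the transition into the next ball (using $r_1\ll r_2$ so the target sits inside the support), and multiply via the Markov property---is exactly the paper's proof; the paper just makes the path explicit by placing the $w_i$ equally spaced along the segment from $x$ to $0$.

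One correction to your first paragraph: the structural claim that $\Phi_2^{(2)}(z,\xi+\xi^1(z))\in R$ for all $\xi$ is false. With $\zeta_1=\xi_1+x_2$ one gets $\Phi_1(y)-\Phi_1(x)=(\pi+2A\sin\xi_1,\pi)$, which leaves $R$ as soon as $\xi_1\neq 0$; the set $R$ is \emph{not} invariant under the two-point map with generic shifts. Fortunately you do not use this---you explicitly pivot to Lemma~\ref{l:R1-uniform-small}, which handles the steering without any claim of $R$-invariance---so the error is harmless for the actual proof.
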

\begin{proof}

Fix $z'=(x',y') \in R_1$ and $z=(x, y)\in R$ such that $z'\in B_\infty(z, r_1)$. Define $l_1$ and $l_2$ as the two line segments joining $x$ and $(z_*)_1 = (0,0)$, $y$ and $(z_*)_2 = (\pi, \pi)$, respectively. 

We find a finite collection of open balls $\set{B_\infty(w_i, r_1)\st w_i\in \T^2}$ that cover $l_1$ such that $w_1=x$, all $w_i$'s are on the segment $l_1$, and $\abs{w_i-w_{i+1}}=r_1$. 

Then since $y-x$ and $(z_*)_2-(z_*)_1$ are parallel by the definition of $R$,  $$\set{T(B_\infty(w_i, r_1))\st w_i \in \T^2}$$ also covers the line segment $l_2$ and all $T(w_i)$'s are on $l_2$. Here, $T:\T^2 \to \T^2, w\mapsto (\pi,\pi)+w$ is the translation map.

By the definition of $w_i$'s, we see that
\begin{align}
    \Leb_{\T^2}(B_\infty(w_i, r_1) \cap B_\infty(w_{i+1}, r_1)) &\geq \Leb_{\T^2}(B(w_i, r_1) \cap B(w_{i+1}, r_1)) \\
    &= \paren[\Big]{\frac{2\pi}{3}-\frac{\sqrt{3}}{2}}r_1^2 \geq r_1^2\,,
\end{align}
and combining this with~\eqref{e:zR-uniform-small} and the fact that
\begin{equation}
 B_\infty(z,r_1)=B_\infty(x,r_1)\times B_\infty(y,r_1)=B_\infty(w_1, r_1)\times T(B_\infty(w_1, r_1))\,,
\end{equation}
and $r_1 \leq r_2$ for $A\geq 1$, we get
\begin{align}\label{e:first-transition}
    &P_A^{(2), 2}((x',y'), B_\infty(w_2, r_1)\times T(B_\infty(w_2, r_1)))\\
    &\geq c_2\Leb_{\T^2}(B_\infty(w_1, r_1)\cap B_\infty(w_2, r_1))\Leb_{\T^2}(T(B_\infty(w_1,r_1))\cap T(B_\infty(w_2, r_1)))\\
    &\geq c_2 r_1^4\,.
\end{align}
Similarly, for each $i\geq 2$, if we let $S_i=B_\infty(w_i, r_1)\times T(B_\infty(w_i, r_1))$, then
\begin{equation}
    \inf_{z''\in S_i} P_A^{(2), 2}(z'', S_{i+1}) \geq c_2r_1^4\,.
\end{equation}

Using the Markov property and the fact that the number of balls required to cover $l_1$ are bounded above by $i\leq \abs{l_1}/r=\abs{x}/r$, we obtain~\eqref{e:R-to-smallset}.
\end{proof}
Combining the preceding results, we are now ready to establish the small set property of the off-diagonal set.
\begin{lemma}\label{l:final-small-set}
    Fix $s_*>0$. Let $r_1, r_2, c_1, c_2$ be as in Lemma~\ref{l:R1-uniform-small} and $p_A$ be as in Lemma~\ref{l:smallset-to-R1}. Define 
    \begin{equation}\label{e:Mdef}
        M=\ceil[\Big]{\frac{6\pi}{A\sin(A^{-2}s_*)}}+2\ceil[\Big]{\frac{2\pi}{r_1}}+2\,.
    \end{equation}
    Then, for sufficiently large $A$, we have
    \begin{equation}\label{e:final-small-set}
        \inf_{z\in \Delta(A^{-2}s_*)^c}P_A^{(2), M}(z, \cdot) \geq p_A (c_2r_1^4)^{\ceil[\big]{\frac{2\pi}{r_1}}}c_1^{\floor{\frac{M}{2}}-1}c_2\Leb(B_\infty(z_*, r_2)\cap \cdot)\,.
    \end{equation}
    Here, $\Delta$ and $z_*$ is defined as in~\eqref{e:on-diagonal-def} and~\eqref{e:z_*-def}, respectively.
\end{lemma}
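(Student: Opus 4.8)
The plan is to concatenate the three preceding lemmas via the Markov (Chapman–Kolmogorov) property, carefully bookkeeping the number of steps and the accumulated minorization constant. I would organize the trajectory from a generic $z \in \Delta(A^{-2}s_*)^c$ into three phases. \emph{Phase 1 (reach $R_1$):} By Lemma~\ref{l:smallset-to-R1}, in $N_1(z)$ steps, with $N_1(z) \le \ceil{6\pi/(A\sin(A^{-2}s_*))} =: N_1^{\max}$, the chain lands in a ball $B_\infty(w, r_1)$ with $w \defeq \Phi_{2N_1}^{(2)}(z,\xi^{N_1}) \in R$, with probability at least $p_A$. \emph{Phase 2 (reach $B_\infty(z_*, r_1)$):} From any point of $R_1$ — in particular from that ball around $w\in R$ — Lemma~\ref{l:R_1-small} gives, in $2N_2(w_1)$ steps with $N_2(w_1) = \ceil{\abs{w_1}/r_1} \le \ceil{2\pi/r_1} =: N_2^{\max}$, arrival in $B_\infty(z_*, r_1)$ with probability at least $(c_2 r_1^4)^{N_2(w_1)}$. \emph{Phase 3 (one more application of uniform smallness at $z_*$):} Since $z_* \in R$ and $B_\infty(z_*, r_1)$ is exactly the neighborhood appearing in Lemma~\ref{l:R1-uniform-small}, applying~\eqref{e:zR-uniform-small} with $z = z_*$ and $n = 2$ yields that from any point of $B_\infty(z_*, r_1)$, the $2$-step kernel dominates $c_1^{0}c_2\,\Leb|_{B_\infty(z_*, r_2)} = c_2\,\Leb|_{B_\infty(z_*, r_2)}$; more generally $n$ steps give $c_1^{\floor{n/2}-1}c_2\,\Leb|_{B_\infty(z_*,r_2)}$.

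The issue is that the total number of steps $N_1(z) + 2N_2(w_1) + (\text{final block})$ depends on $z$, whereas Lemma~\ref{l:final-small-set} requires a single fixed time $M$. I would resolve this exactly as in the proof of Lemma~\ref{l:R1-uniform-small}: the minorizing measure in~\eqref{e:zR-uniform-small} is supported on a ball \emph{concentric} with the small set (namely $B_\infty(z_*, r_2) \supset B_\infty(z_*, r_1)$ since $r_1 \le r_2$ for $A \ge 1$), so once the chain is in $B_\infty(z_*, r_1)$ it can be made to stay there: each additional pair of steps costs a factor $\mu(B_\infty(z_*, r_1))$-type loss, i.e. a factor of $c_1$, exactly as~\eqref{e:even-small} was iterated. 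Concretely, define $M$ as in~\eqref{e:Mdef}, namely $M = N_1^{\max} + 2N_2^{\max} + 2$, which is an upper bound for every realized path length. For a given $z$, after the first $N_1(z) + 2N_2(z)$ steps the chain is in $B_\infty(z_*, r_1)$ with probability at least $p_A (c_2 r_1^4)^{N_2(w_1)} \ge p_A (c_2 r_1^4)^{N_2^{\max}}$ (using $c_2 r_1^4 \le 1$ for large $A$), and the remaining $m \defeq M - N_1(z) - 2N_2(z) \ge 2$ steps, by the $R$-uniform small set property at $z_*$ together with the Markov property (as in~\eqref{e:even-small}), contribute at least $c_1^{\floor{m/2}-1}c_2\,\Leb|_{B_\infty(z_*, r_2)}$. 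Since $\floor{m/2} \le \floor{M/2}$ and $c_1 \le 1$, we have $c_1^{\floor{m/2}-1} \ge c_1^{\floor{M/2}-1}$, so the composite lower bound is at least $p_A (c_2 r_1^4)^{N_2^{\max}} c_1^{\floor{M/2}-1} c_2 \Leb|_{B_\infty(z_*,r_2)}$, which is~\eqref{e:final-small-set}.

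The one genuinely delicate point — the main obstacle — is making the phase-boundary matching airtight: the output of Phase~1 is not a single point but a ball $B_\infty(w, r_1)$ with $w \in R$, and I must ensure that the hypothesis of Lemma~\ref{l:R_1-small} (which needs $z' \in B_\infty(z, r_1)$ for some $z \in R$) is met for \emph{every} point in the support of the Phase~1 distribution, and similarly that Lemma~\ref{l:R1-uniform-small} at $z_*$ may be applied from every point of $B_\infty(z_*, r_1)$ — both of which are exactly the hypotheses as stated, so this is a matter of careful but routine invocation. I would also note that the bound $N_2(w_1) \le \ceil{2\pi/r_1}$ uses $\abs{w_1} \le 2\pi$ on the torus, and that $N_1(z) \le N_1^{\max}$ requires $A$ large enough that $\ceil{6\pi/(A\sin(A^{-2}s_*))}$ is the relevant bound, which is precisely the regime in Lemma~\ref{l:smallset-to-R1}. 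Finally, one applies the Markov property three times to chain the three phases, collecting the constants; since the minorizing measures in each phase are supported appropriately (the Phase~2 target $B_\infty(z_*, r_1)$ feeds into the Phase~3 hypothesis, and the Phase~3 target $B_\infty(z_*, r_2)$ is the claimed support), no further compatibility check is needed.
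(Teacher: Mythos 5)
Your proposal is correct and follows essentially the same route as the paper: the same three-phase decomposition (reach $R$ via Lemma~\ref{l:smallset-to-R1}, reach $B_\infty(z_*,r_1)$ via Lemma~\ref{l:R_1-small}, then dwell/minorize via Lemma~\ref{l:R1-uniform-small}), the same use of the concentric-support property of the minorizing measure to pad the $z$-dependent path length up to the fixed time $M$, and the same monotonicity observations ($c_2 r_1^4\le 1$, $c_1\le 1$, $n\le M$) to replace the path-dependent exponents by their worst-case values.
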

\begin{proof}
    We split the transitions from $\Delta(A^{-2}s_*)^c$ to the measure $\Leb(B_\infty(z_*, r_2)\cap \cdot)$ into three parts: $\Delta(A^{-2}s_*)^c$ to $R$, $R$ to $B(z_*, r_1)$, and $B(z_*, r_1)$ recurring to itself. If we fix $z=(x,y)\in \Delta(A^{-2}s_*)^c$, define $N_1$ and $N_2$ as in Lemma~\ref{l:smallset-to-R1} and Lemma~\ref{l:R_1-small}, and denote $w=\Phi_{2N_1}^{(2)}(z, \xi^{N_1})=(w_1,w_2)\in R$, then each phase takes $N_1(z), N_2(w_1)$ and $2$ steps. Moreover,
    \begin{align}
        P_A^{(2), N_1(z)+2N_2(w_1)}(z, B_\infty&(z_*, r_1)) \\
        &\geq \int_{B_\infty(w, r_1)} P_A^{(2), 2N_2(w_1)}(z', B_\infty(z_*, r_1)) P_A^{(2), N_1(z)}(z, dz')\\
        &\overset{\eqref{e:to-smallset-lb}, \eqref{e:R-to-smallset}}{\geq} p_A(c_2r_1^4)^{N_2(w_1)}\,.
    \end{align}
    Combining this with the fact that $B_\infty(z_*, r_1)$ is small by Lemma~\ref{l:R1-uniform-small}, we get that for any $n\geq 2$,
    \begin{equation}\label{e:longtime-smallset}
        P_A^{(2), N_1(z)+2N_2(w_1)+n}(z, \cdot) \overset{\eqref{e:zR-uniform-small}}{\geq} p_A(c_2r_1^4)^{N_2(w_1)} c_1^{\floor{\frac{n}{2}}-1} c_2 \Leb|_{B_\infty(z, r_2(A))}(\cdot)\,.
    \end{equation}
    Using the bound~\eqref{e:time-to-smallset-ub} for $N_1(z)$ and the bound
    \begin{equation}\label{e:N2-ub}
        N_2(w_1) \overset{\eqref{e:R-to-smallset}}{\leq} \ceil{2\pi/r_1}\,,
    \end{equation}
    we see that $n=M-N_1(z)-2N_2(w_1) \geq 2$ and using this $n$ in~\eqref{e:longtime-smallset} and the bounds $n\leq M$ and~\eqref{e:N2-ub} again, we deduce~\eqref{e:final-small-set}.
\end{proof}

\subsection{Proof of Lemma~\ref{l:Lyapunov-drift} and Lemma~\ref{l:Harris-conditions}}\label{s:proof-of-harris-lemma}
In this subsection, we prove Lemma~\ref{l:Lyapunov-drift} and Lemma~\ref{l:Harris-conditions}. 
Our construction of the Lyapunov function in Lemma~\ref{l:Lyapunov-drift} is motivated by 
the approach in~\cite[Proposition 9.1]{CoopermanIyerSon25}, but here we explicitly track the 
dependence of the associated constants on the amplitude $A$, completing the proof with the 
identification of an additive constant $K_1$. 
Lemma~\ref{l:Harris-conditions} then follows directly from Lemma~\ref{l:final-small-set}, 
as detailed below.

\begin{proof}[Proof of Lemma~\ref{l:Lyapunov-drift}]
    Proposition 9.1 in~\cite{CoopermanIyerSon25} proves that there exists $\gamma\in (0,1)$, $p\in (0,1)$, and $s_*>0$ such that for all sufficiently large $A>0$, 
    \begin{equation}\label{e:on-diagonal}
        P_A^{(2)}V \leq \gamma V \quad\text{on}\quad \Delta(s_*)\,,
    \end{equation}
    where $\Delta(s_*)$ is defined as in~\eqref{e:on-diagonal-def} and
    $V$ is defined as the continuous extension of
    \begin{equation}\label{e:Vdef}
        \tilde V(x,y) = \abs{x-y}_\infty^{-p} \quad\text{on}\quad \Delta(s_*)\,.
    \end{equation} 
    Then we see that for any $(x, y) \in \Delta(s_*)^c$, 
    \begin{equation}
        \abs{\Phi_2(x)-\Phi_2(y)}_\infty \geq A^{-2}\abs{x-y}_\infty \geq A^{-2}s_* \quad\text{a.s.}\,,
    \end{equation}
    and hence
    \begin{equation}\label{e:off-diagonal}
        P_A^{(2)}V \leq A^{2p}s_*^{-p} \quad\text{on}\quad \Delta(s_*)^c\,,
    \end{equation}
    for sufficiently large $A$.
    Combining~\eqref{e:on-diagonal} and~\eqref{e:off-diagonal} and setting $K_1=s_*^{-p}$ yields~\eqref{e:lyapunov-one-step}.
\end{proof}

\begin{proof}[Proof of Lemma~\ref{l:Harris-conditions}]
For each $n\in \N$, applying $P_A^{(2)}$ for $(n-1)$ times on both sides of the inequality~\eqref{e:lyapunov-one-step} yields that
\begin{equation}
    P_A^{(2), n} V \leq \gamma^n V + K_1A^{2p}\frac{1-\gamma^n}{1-\gamma}\,.
\end{equation}
Setting $n=M$, which we shall choose shortly, immediately implies the first Harris condition~\eqref{e:lyapunov-multi-step}.
To prove the second Harris condition~\eqref{e:sublevel-V-small}, fix a constant $C>1$ and set $R_2 = 2CK_1A^{2p}/(1-\gamma)$. Then the definitions of $V$~\eqref{e:Vdef} and $\Delta$~\eqref{e:on-diagonal-def} imply that there exists an $A$-independent constant $s_*' > 0$ such that 
\begin{equation}\label{e:sub-level-off-diagonal}
    \set{V\leq R_2} \subset \Delta(A^{-2}s_*')^c\,.
\end{equation}
Applying Lemma~\ref{l:final-small-set} to $s_*=s_*'$, setting $M$ as in~\eqref{e:Mdef}, and using the definition of $r_1, r_2, c_1, c_2$, and $p_A$ as in Lemma~\ref{l:R1-uniform-small} and Lemma~\ref{l:smallset-to-R1} imply the bound $M\leq CA^{95}$ and the small set property~\eqref{e:final-small-set}, which yields for all sufficiently large $A>0$,
\begin{equation}\label{e:off-diagonal-small}
    \inf_{z\in \Delta(A^{-2}s_*')^c} P_A^{(2), M}(z, \cdot) \geq A^{-CA^{95}}\nu_A(\cdot)\,, 
\end{equation}
where $\nu_A = (\Leb(B_\infty(z_*, r_2)))^{-1}\Leb|_{B_\infty(z_*, r_2)}$.
Combining~\eqref{e:off-diagonal-small} with~\eqref{e:sub-level-off-diagonal} completes the proof for~\eqref{e:sublevel-V-small}.
\end{proof}

\section{Proof of Theorem~\ref{t:main-theorem}}\label{s:proof-of-main-theorem}

In this section, we complete the proof of Theorem~\ref{t:main-theorem}.  
As is well known from previous works~\cite{BlumenthalCotiZelatiEA23, BedrossianBlumenthalEA21, BedrossianBlumenthalEA22, DolgopyatKaloshinEA04, CoopermanIyerSon25, CoopermanIyerEA25}, the geometric ergodicity of the two-point Markov chain implies almost sure mixing of the one-point chain. To establish this ergodicity and quantify its dependence on the amplitude~\(A\), we apply a quantitative version of the Harris theorem~\cite{HairerMattingly11}. Lemma~\ref{l:Lyapunov-drift} provides a suitable Lyapunov function and tracks the dependence of its constants on~\(A\), while Lemma~\ref{l:Harris-conditions} combines this with the verification of the small set property for an appropriate sublevel set. Together, these results yield the framework required to rigorously control the mixing rate in terms of the flow amplitude.

\begin{proof}[Proof of Theorem~\ref{t:main-theorem}]
    Let $p, \gamma, K_1, V, R_2, M, \nu_A$ be as in Lemma~\ref{l:Harris-conditions}.
    For notational convenience, we define
    \begin{align}
        L_n &\defeq K_1A^{2p}\frac{1-\gamma^n}{1-\gamma}\,,\\
        \alpha &\defeq A^{-CA^{95}}\,,\\
        \gamma_0 &\defeq \frac{3}{4}\,,\\
        \beta &\defeq \frac{\alpha}{2L_M}\,,\\
        \bar \alpha &\defeq \max\set[\Big]{1-\frac{\alpha}{2}, \frac{2+R_2\beta \gamma_0}{2+R_2\beta}}\,,
    \end{align}
    and follow the proof of~\cite[Lemma 3.2, 3.3]{CoopermanIyerSon25}.
    Indeed, $L_M / R_2 = (1-\gamma^M)/4$ so that $\gamma_0=3/4 \in (\frac{1+\gamma^M}{2}, 1) = (\gamma^M + 2\frac{L_M}{R_2}, 1)$ for all sufficiently large $A$. This and Lemma~\ref{l:Harris-conditions} imply that the chain $P_A^{(2), M}$ satisfies the condition for~\cite[Theorem 7.1]{CoopermanIyerSon25} and hence, following the proof of~\cite[Lemma 3.2]{CoopermanIyerSon25}, we obtain that for any $n\in \N$ and mean-zero test function $\varphi$,
    \begin{equation}
        \norm{P_A^{(2), Mn}\varphi}_\beta \leq C\bar \alpha^n \norm{\varphi}_\beta\,,
    \end{equation}
    where the norm $\norm{\cdot}_\beta$ is defined as
    \begin{equation}
        \norm{f}_\beta \defeq \sup_x \frac{\abs{f(x)}}{1+\beta V(x)}\,.
    \end{equation}
    In addition, for all sufficiently large $A>0$,
    \begin{align}
        \abs[\Big]{\frac{P_A^{(2)} \varphi (z)}{1+\beta V(z)}} &\leq \int \abs[\Big]{\frac{\varphi(z')}{1+\beta V(z')}\frac{1+\beta V(z')}{1+\beta V(z)} P_A^{(2)}(z,dz')} \overset{\eqref{e:lyapunov-one-step}}{\leq}  \norm{\varphi}_\beta \frac{1+\beta (\gamma V(z) + L_1)}{1+\beta V(z)} \\
        &= \norm{\varphi}_\beta \paren[\Big]{\gamma + \frac{1-\gamma+\beta L_1}{1+\beta V(z)}} \leq \norm{\varphi}_\beta (1+\beta L_1) \leq \norm{\varphi}_\beta (1+\alpha)\,,
    \end{align}
    which implies
    \begin{equation}
        \norm{P_A^{(2)}\varphi}_\beta \leq (1+\alpha)\norm{\varphi}_\beta\,.
    \end{equation}

    Thus, for all $n\in \N$,
    \begin{equation}    
        \norm{P_A^{(2), n}\varphi}_\beta \leq C (1+\alpha)^M \bar \alpha^{\frac{n}{M}-1} \norm{\varphi}_\beta\,.
    \end{equation}

    $\lim_{A\to \infty}(1+\alpha)^M = 1$ so for all sufficiently large $A$, $(1+\alpha)^M < 2$ and also $\alpha$ is small enough to satisfy
    \begin{equation}
        1- \frac{\alpha}{2}<\frac{2+R_2\beta \gamma_0}{2+R_2\beta} = \frac{3}{4}+\frac{1}{4}\frac{1}{1+(1-\gamma^M)^{-1}\alpha} < 1-\frac{\alpha}{4} \,,
    \end{equation}
    and hence $\frac{1}{2}<\bar{\alpha}=\frac{2+R_2\beta\gamma_0}{2+R_2\beta} < 1-\frac{\alpha}{4}$.

    This implies
    \begin{equation}
        \norm{P_A^{(2), n}\varphi}_\beta \leq C \paren[\bigg]{1-\frac{\alpha}{4}}^\frac{n}{M} \norm{\varphi}_\beta \leq Ce^{-\frac{1}{4} \alpha^2 n}\norm{\varphi}_\beta\,,
    \end{equation}
    for all sufficiently large $A>0$.

   Following the proof of~\cite[Lemma3.3]{CoopermanIyerSon25}, we can choose
   \begin{equation}\label{e:zetadef}
   \zeta \defeq \frac{1}{16}\min\set[\big]{\frac{1}{1+q}, \frac{1}{4}}\alpha^2
   \end{equation}
   for a fixed $q<\infty$ so that $\zeta$ satisfies (8.2)--(8.5), where $d=2, \beta=\frac{1}{4}\alpha^2$. 
   We note that the dependence on $A$ only enters in the last line of the display for the moment bound $\E[\hat D_A^q]$ at the end of the proof, and the inequality immediately preceding (8.1). We track the dependence by examining
   \begin{equation}
      \frac{\zeta q}{\frac{1}{4}\alpha^2 - 2\zeta(1+q)}\,,\quad \norm{e_{m'}^{(2)}}_\beta\,, \quad\text{and}\quad \int (1+\beta V)d\pi^{(2)}\,,
   \end{equation}
   and observe that they satisfy the bound
   \begin{equation}
       \norm{e_{m'}^{(2)}}_\beta\leq 1\,, \quad\text{and}\quad \int (1+\beta V)d\pi^{(2)}\leq 2\,,
   \end{equation}
   for sufficiently large $A>0$, and also the choice of $\zeta$ \eqref{e:zetadef} guarantees
$\frac{\zeta q}{(1/4)\alpha^2 - 2\zeta(1+q)}$ is independent of $A$.
   Then, all the proof will go unchanged for sufficiently large $A>0$ and imply~\eqref{e:mixing} and~\eqref{e:Dmoment-bound}.
\end{proof}

\appendix
\section{Various Estimates for the Flows}

Recall that for any $z\in \T^{2, (2)}$, we denote $z = (x, y)$ for some $x, y \in \T^2$, and for an arbitrary $x \in \T^2$, we write its first and second coordinates as $x_1$ and $x_2$, respectively. For notational brevity, we also suppress the dependence of $\Phi_n^{(2)}$ and $\Phi_n$ on $(\xi_1, \xi_2, \ldots, \xi_n) \in \T^n$ in the notation.

\begin{lemma}
For any $n\geq 2$, there exists some large constant $C(n)>0$, depending on $n$, such that for any $A\geq 1$, $i,i'\in \set{1,\ldots n}$, and $j,j'\in \set{1,2,3,4}$, 
    \begin{align}
        \norm{D_{\xi_i} \Phi_n^{(2)}(z, (\xi_1, \ldots, \xi_n))}_\infty &\leq CA^n\,,\label{e:dzeta}\\
        \norm{D_{z_j} \Phi_n^{(2)}(z, (\xi_1, \ldots, \xi_n))}_\infty &\leq CA^n\,,\label{e:dx}\\
        \norm{D_{\xi_i}D_{\xi_{i'}} \Phi_n^{(2)}(z, (\xi_1, \ldots, \xi_n))}_\infty &\leq CA^{2n-1}\,,\label{e:dzetadzeta}\\
        \norm{D_{z_j}D_{z_{j'}} \Phi_n^{(2)}(z, (\xi_1, \ldots, \xi_n))}_\infty &\leq CA^{2n-1}\,,\label{e:dxdx}\\
        \norm{D_{z_j} D_{\xi_{i}} \Phi_n^{(2)}(z, (\xi_1, \ldots, \xi_n))}_\infty &\leq CA^{2n-1}\,, \label{e:dxdzeta}\\
    \end{align}
\end{lemma}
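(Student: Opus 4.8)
The plan is to prove the five derivative bounds \eqref{e:dzeta}--\eqref{e:dxdzeta} by a single induction on the number of half-time steps, exploiting the semigroup structure $\Phi_n^{(2)} = \Phi_1^{(2)}(\cdot\,;\xi_n) \circ \Phi_{n-1}^{(2)}(\cdot\,;\xi_1,\dots,\xi_{n-1})$, where $\Phi_1^{(2)}$ denotes the flow over a single unit-time shear. First I would record the base case: a single shear map on $\T^2$ has the form $(x_1,x_2)\mapsto (x_1+A\sin(x_2-\xi),x_2)$ (or the analogous map in the other coordinate), so its first derivatives in $z$ and $\xi$ are bounded by $CA$ and its second derivatives by $CA$ as well; the two-point version on $\T^{2,(2)}$ is just the diagonal product of two such maps, so the same bounds hold with a dimension-dependent constant. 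This establishes \eqref{e:dzeta}--\eqref{e:dxdzeta} for $n=1$ with exponents $A^1$ and $A^{1}$ (and note $2n-1=1$ when $n=1$, so the second-derivative bound is consistent).

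Next I would carry out the inductive step. Write $\Psi = \Phi_{n-1}^{(2)}$ and $S = \Phi_1^{(2)}(\cdot\,;\xi_n)$, so $\Phi_n^{(2)} = S \circ \Psi$ as functions of $z$ (with $\xi_n$ entering only through $S$, and $\xi_1,\dots,\xi_{n-1}$ only through $\Psi$). For a first derivative in $z_j$, the chain rule gives $D_{z_j}\Phi_n^{(2)} = (DS)(\Psi)\, D_{z_j}\Psi$, and bounding the operator norm by the product $\le CA \cdot CA^{n-1} = CA^n$ gives \eqref{e:dx}; the same computation with $D_{\xi_i}$ for $i \le n-1$ gives \eqref{e:dzeta} for those indices, while $D_{\xi_n}\Phi_n^{(2)} = (D_{\xi_n}S)(\Psi)$ is bounded directly by $CA \le CA^n$. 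For the second derivatives one differentiates again: $D_{z_j}D_{z_{j'}}\Phi_n^{(2)}$ produces two types of terms, $(D^2 S)(\Psi)[D_{z_j}\Psi, D_{z_{j'}}\Psi]$ bounded by $CA \cdot (CA^{n-1})^2 = CA^{2n-1}$, and $(DS)(\Psi)\, D_{z_j}D_{z_{j'}}\Psi$ bounded by $CA \cdot CA^{2(n-1)-1} = CA^{2n-3} \le CA^{2n-1}$; the dominant exponent $2n-1$ comes from the first term, which is exactly the claimed bound \eqref{e:dxdx}. The mixed second derivatives in \eqref{e:dxdzeta} and the pure $\xi$ second derivatives in \eqref{e:dzetadzeta} are handled the same way, splitting according to whether the $\xi$-index is $n$ or at most $n-1$: e.g. $D_{z_j}D_{\xi_n}\Phi_n^{(2)} = (D\,D_{\xi_n}S)(\Psi)\,D_{z_j}\Psi$ is bounded by $CA \cdot CA^{n-1} = CA^n \le CA^{2n-1}$, and $D_{z_j}D_{\xi_i}\Phi_n^{(2)}$ for $i\le n-1$ splits into $(D^2S)(\Psi)[D_{z_j}\Psi,D_{\xi_i}\Psi] + (DS)(\Psi)D_{z_j}D_{\xi_i}\Psi$, both $\le CA^{2n-1}$ by the inductive hypotheses.

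The only real bookkeeping obstacle is making sure the constants $C(n)$ are allowed to grow with $n$ (which the statement permits) so that the finitely many cross terms and the dimension factors from the $\ell^\infty$-to-operator-norm conversions can be absorbed, and keeping track of which of the three quantities $z$, $(\xi_1,\dots,\xi_{n-1})$, $\xi_n$ a given derivative acts on — but none of this is delicate, since at each step the new shear map $S$ contributes at most one extra power of $A$ per derivative and its $C^2$ norm is $O(A)$. I do not expect any genuine difficulty here; the lemma is a routine consequence of the chain rule together with the explicit $O(A)$ bounds on the one-step shear and its derivatives, iterated $n$ times, and the proof amounts to organizing this induction cleanly.
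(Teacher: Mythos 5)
Your proposal is correct and follows essentially the same route as the paper: an induction on the number of shear half-steps, writing $\Phi_{n}^{(2)}$ as the one-step shear (whose first and second derivatives are $O(A)$) composed with $\Phi_{n-1}^{(2)}$ and applying the chain/product rule, with the dominant second-derivative term being the one quadratic in first derivatives of the previous flow. The only blemish is the harmless arithmetic slip $CA\cdot CA^{2(n-1)-1}=CA^{2n-2}$ (not $CA^{2n-3}$), which still sits below the claimed $CA^{2n-1}$.
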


\begin{proof}

We prove the statement by induction. For $n = 1$, the definition of the velocity field $u$ in~\eqref{e:udef} immediately implies
\begin{equation}
    \Phi_1^{(2)}(z) = (\Phi_1(x), \Phi_1(y)) 
    = (x_1 + A \sin(x_2 - \xi_1),\, x_2,\, y_1 + A \sin(y_2 - \xi_1),\, y_2)\,.
\end{equation}
Hence, there exists a constant $C > 0$ such that, for all $A \geq 1$, the estimates~\eqref{e:dzeta}--\eqref{e:dxdzeta} hold with $n = 1$.

Suppose the statement holds with $n=m$ for some $m\in \N$. Without loss of generality, we assume $m$ is even. Then, using the definition of the velocity field $u$ again, we obtain
\begin{equation}\label{e:phi2-recurrence}
    \Phi_{m+1}^{(2)}(z) = \Phi_m^{(2)}(z) + A(\sin(\Phi_m(x)_2 - \xi_{m+1}), 0, \sin(\Phi_m(y)_2 - \xi_{m+1}), 0)\,.
\end{equation}
This implies that for $j, j'\in \set{1,2}$,
\begin{align}
    \label{e:dxj}
    D_{x_j}\Phi_{m+1}^{(2)}(z) &= D_{x_j}\Phi_{m}^{(2)}(z) + A(\cos(\Phi_{m}(x)_2-\xi_{m+1})D_{x_j}\Phi_{m}(x)_2,0,0,0)\,,\\
    \label{e:dxj'dxj}
    D_{x_{j'}}D_{x_j}\Phi_{m+1}^{(2)}(z) &= D_{x_{j'}}D_{x_j}\Phi_{m}^{(2)}(z) + B_1^{j, j'} + B_2^{j, j'}\,,\\
    \label{e:dyj'dxj}
    D_{y_{j'}}D_{x_j} \Phi_{m+1}^{(2)}(z) &= D_{y_{j'}}D_{x_j}\Phi_{m}^{(2)}(z) = \ldots = D_{y_{j'}}D_{x_j}\Phi_1^{(2)}(z) = 0\,,
\end{align}
where
\begin{align}
    \label{e:b1def}
    B_1^{j, j'} &= A(-\sin(\Phi_{m}(x)_2-\xi_{m+1})D_{x_{j'}}\Phi_{m}(x)_2 D_{x_j}\Phi_{m}(x)_2,0,0,0)\,,\\
    \label{e:b2def}
    B_2^{j, j'} &= A(\cos(\Phi_{m}(x)_2-\xi_{m+1})D_{x_{j'}}D_{x_j}\Phi_{m}(x)_2,0,0,0)\,.
\end{align}
We notice that using these computations and the induction hypothesis yields 
\begin{align}
    \norm{D_{x_j}\Phi_{m+1}^{(2)}}_\infty &\leq (1+A)\norm{D_{x_j}\Phi_{m}^{(2)}}_\infty \leq CA^{m+1}\,,\\
    \norm{D_{x_{j'}} D_{x_j}\Phi_{m+1}^{(2)}}_\infty &\leq (1+A)\norm{D_{x_{j'}} D_{x_j}\Phi_{m}^{(2)}}_\infty+A\norm{D_{x_{j'}}\Phi_{m}^{(2)}}_\infty\norm{D_{x_j}\Phi_{m}^{(2)}}_\infty \\
    &\leq CA^{2m} + CA^{2m+1} \leq CA^{2m+1}\,,
\end{align}
for all $A\geq 1$.
$D_{y_j}\Phi_{m+1}^{(2)}$ and $D_{y_{j'}}D_{y_j}\Phi_{m+1}^{(2)}$ cases are similar so combining this with~\eqref{e:dyj'dxj} completes the proof for~\eqref{e:dx} and~\eqref{e:dxdx}.

Similarly, to prove~\eqref{e:dzeta} and~\eqref{e:dzetadzeta}, we differentiate both sides of~\eqref{e:phi2-recurrence} with resepct to $\xi$ and see that for all $i, i'\in \set{1,\ldots, m}$,
\begin{align}
    \label{e:dzetai-m+1}
    D_{\xi_{i}}\Phi_{m+1}^{(2)}(z) &= D_{\xi_{i}}\Phi_{m}^{(2)}(z) + B_3^i\,, \\
    \label{e:dzetam+1-m+1}
     D_{\xi_{m+1}}\Phi_{m+1}^{(2)}(z) &= B_4\,,\\
     D_{\xi_{i'}}D_{\xi_{i}}\Phi_{m+1}^{(2)}(z) &= D_{\xi_{i'}}D_{\xi_{i}}\Phi_{m}^{(2)}(z) + B_5^{i, i'} + B_6^{i, i'}\,,\\
     D_{\xi_i}D_{\xi_{m+1}}\Phi_{m+1}^{(2)}(z) &= B_7^i\,,\\
     D_{\xi_{m+1}}D_{\xi_{m+1}}\Phi_{m+1}^{(2)}(z) &= B_8\,,
\end{align}
where
\begin{align}
    B_3^i &= A(\cos(\Phi_{m}(x)_2-\xi_{m+1})D_{\xi_i}\Phi_{m}(x)_2,0,
               \cos(\Phi_{m}(y)_2-\xi_{m+1})D_{\xi_i}\Phi_{m}(y)_2,0)\,,\\
    B_4 &= -A(\cos(\Phi_{m}(x)_2-\xi_{m+1}),0,
               \cos(\Phi_{m}(y)_2-\xi_{m+1}),0)\,,\\[4pt]
    B_5^{i,i'} &= -A\big(
        \sin(\Phi_{m}(x)_2-\xi_{m+1})
        D_{\xi_{i'}}\Phi_{m}(x)_2D_{\xi_i}\Phi_{m}(x)_2,0, \\
        &\hspace{4em}
        \sin(\Phi_{m}(y)_2-\xi_{m+1})
        D_{\xi_{i'}}\Phi_{m}(y)_2D_{\xi_i}\Phi_{m}(y)_2,0
    \big)\,,\\[2pt]
    B_6^{i,i'} &= A\big(
        \cos(\Phi_{m}(x)_2-\xi_{m+1})
        D_{\xi_{i'}}D_{\xi_i}\Phi_{m}(x)_2,0, \\
        &\hspace{4em}
        \cos(\Phi_{m}(y)_2-\xi_{m+1})
        D_{\xi_{i'}}D_{\xi_i}\Phi_{m}(y)_2,0
    \big)\,,\\[2pt]
    B_7^i &= A(\sin(\Phi_{m}(x)_2-\xi_{m+1})D_{\xi_i}\Phi_{m}(x)_2,0,
               \sin(\Phi_{m}(y)_2-\xi_{m+1})D_{\xi_i}\Phi_{m}(y)_2,0)\,,\\
    B_8 &= -A(\sin(\Phi_{m}(x)_2-\xi_{m+1}),0,
              \sin(\Phi_{m}(y)_2-\xi_{m+1}),0)\,.
\end{align}

Using these computations and the induction hypothesis, we obtain that for all $i,i' \in \set{1, \ldots, m}$ and $A \geq 1$,
\begin{align}
    \norm{D_{\xi_{i}}\Phi_{m+1}^{(2)}}_\infty &\leq (1+A)\norm{D_{\xi_{i}}\Phi_{m}^{(2)}}_\infty \leq CA^{m+1}\,,\\
    \norm{D_{\xi_{m+1}}\Phi_{m+1}^{(2)}}_\infty &\leq A\,,\\
    \norm{D_{\xi_{i'}}D_{\xi_{i}}\Phi_{m+1}^{(2)}}_\infty &\leq (1+A)\norm{D_{\xi_{i'}}D_{\xi_{i}}\Phi_{m}^{(2)}}_\infty + A\norm{D_{\xi_{i'}}\Phi_{m}^{(2)}}_\infty\norm{D_{\xi_{i}}\Phi_{m}^{(2)}}_\infty \\
    &\leq CA^{2m} + CA^{2m+1} \leq CA^{2m+1}\,,\\
    \norm{D_{\xi_{i}}D_{\xi_{m+1}}\Phi_{m+1}^{(2)}}_\infty &\leq A\norm{D_{\xi_{i}}\Phi_{m}^{(2)}}_\infty \leq CA^{m+1}\,,\\
    \norm{D_{\xi_{m+1}}D_{\xi_{m+1}}\Phi_{m+1}^{(2)}}_\infty &\leq A\,,
\end{align}
and hence~\eqref{e:dzeta} and~\eqref{e:dzetadzeta} still hold for $n=m+1$.
Finally, to prove~\eqref{e:dxdzeta}, we differentiate~\eqref{e:dzetai-m+1} and~\eqref{e:dzetam+1-m+1} with respect to $x_j$, $j\in\set{1,2}$. We compute that for all $i\in \set{1,\ldots, m}$ and $j\in \set{1,2}$,
\begin{align}
    D_{x_j}D_{\xi_{i}}\Phi_{m+1}^{(2)}(z) &= D_{x_{j}}D_{\xi_{i}}\Phi_{m}^{(2)}(z) + B_9^{i, j} + B_{10}^{i,j}\,,\\
    D_{x_j}D_{\xi_{m+1}}\Phi_{m+1}^{(2)}(z) &= B_{11}^{j}\,,
\end{align}
where
\begin{align}
    B_9^{i, j} &= -A(\sin(\Phi_{m}(x)_2-\xi_{m+1})D_{x_j}\Phi_{m}(x)_2 D_{\xi_i}\Phi_{m}(x)_2,0,0,0)\,,\\
    B_{10}^{i, j} &= A(\cos(\Phi_{m}(x)_2-\xi_{m+1})D_{x_j}D_{\xi_i}\Phi_{m}(x)_2,0,0,0)\,,\\
    B_{11}^j &= A(\sin(\Phi_{m}(x)_2-\xi_{m+1})D_{x_j}\Phi_{m}(x)_2,0,0,0)\,.
\end{align}
We deduce that for all $A\geq 1$,
\begin{align}
    \norm{D_{x_j}D_{\xi_{i}}\Phi_{m+1}^{(2)}}_\infty &\leq (1+A)\norm{D_{x_j}D_{\xi_{i}}\Phi_{m}^{(2)}}_\infty + A \norm{D_{x_j}\Phi_{m}^{(2)}}_\infty\norm{D_{\xi_{i}}\Phi_{m}^{(2)}}_\infty\\
    &\leq CA^{2m} + CA^{2m+1} \leq CA^{2m+1}\,,\\
    \norm{D_{x_j}D_{\xi_{m+1}}\Phi_{m+1}^{(2)}}_\infty &\leq A\norm{D_{x_j}\Phi_{m}^{(2)}}_\infty \leq CA^{m+1}\,.
\end{align}
$D_{y_j}D_{\xi_i}\Phi_{m+1}^{(2)}$ and $D_{y_j}D_{\xi_{m+1}}\Phi_{m+1}^{(2)}$ cases are similar so this completes the proof for~\eqref{e:dxdzeta}.
\end{proof}

\begin{lemma}\label{l:det-lb}
Let $z_*=((0,0),(\pi, \pi))$ and $\xi_*^2=(0,0,0,0)$. Then,
\begin{equation}\label{e:det-dzeta-z*}
    \abs{\det D_\zeta \Phi_4^{(2)}(z_*, \xi_*^2)} = 4A^6\,.
\end{equation}

\end{lemma}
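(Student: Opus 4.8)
The plan is to compute the $4\times 4$ Jacobian $D_\zeta\Phi_4^{(2)}(z_*,\xi_*^2)$ explicitly and then evaluate its determinant. The computation is made tractable by the fact that $z_*$ is a fixed point of the four–step flow at phases $\xi_*^2=0$: since $\sin 0=\sin\pi=0$, each of the four alternating shear substeps appearing in~\eqref{e:umdef} leaves both $(0,0)$ and $(\pi,\pi)$ unmoved, so along the trajectory of $x=(0,0)$ every intermediate iterate is $(0,0)$, while along the trajectory of $y=(\pi,\pi)$ every intermediate iterate is $(\pi,\pi)$. Hence every cosine factor generated when one differentiates the composition of the four shears by the chain rule equals $\cos 0=1$ along the $x$–trajectory and $\cos\pi=-1$ along the $y$–trajectory.

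First I would unwind $\Phi_4^{(2)}(z,\zeta)=(\Phi_4(x;\zeta),\Phi_4(y;\zeta))$ as the composition of the four substeps, each of which is a shear of the form $(w_1,w_2)\mapsto(w_1+A\sin(w_2-\xi),w_2)$ or $(w_1,w_2)\mapsto(w_1,w_2+A\sin(w_1-\xi))$. Differentiating each of the four output coordinates in $\xi_1,\xi_2,\xi_3,\xi_4$ and evaluating at $(z_*,\xi_*^2)$, every partial derivative collapses to an explicit polynomial in $A$ (the only trigonometric arguments occurring are $0$ and $\pi$), yielding a concrete $4\times 4$ matrix whose first two rows $R_1,R_2$ come from $\Phi_4(x)$ and whose last two rows $R_3,R_4$ come from $\Phi_4(y)$. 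By the sign remark above, the $x$– and $y$–rows are related through the sign changes coming from $\cos\pi=-\cos 0$, and one checks that the combinations $R_1+R_3$ and $R_2+R_4$ have many vanishing entries.

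Finally I would use this to evaluate the determinant: replacing $R_3$ by $R_1+R_3$ and $R_4$ by $R_2+R_4$ produces two very sparse rows, after which cofactor expansion along a column with a single nonzero entry, followed by another expansion along a sparse row, reduces the $4\times 4$ determinant to a $2\times 2$ one. Completing the remaining elementary algebra then gives $\abs{\det D_\zeta\Phi_4^{(2)}(z_*,\xi_*^2)}=4A^6$, which is~\eqref{e:det-dzeta-z*}.

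The main obstacle is not conceptual but organizational: one must carry the nested chain rule carefully through the four composed shears — for instance, the $\xi_1$–derivative of the first coordinate after three substeps already requires iterating the chain rule and produces a term proportional to $A^3$ — and keep meticulous track of the signs arising from $\cos\pi=-\cos 0$, since it is exactly these signs that make the $x$– and $y$–rows combine so cleanly at the last step. Recording, after each substep and at the base point, both the value and the $\zeta$–gradient of every coordinate keeps the whole computation routine and minimizes the risk of sign errors.
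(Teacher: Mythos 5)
Your proposal is correct and follows essentially the same route as the paper: both compute the $4\times4$ Jacobian explicitly by chain-ruling through the four shears at the fixed point $z_*$ (where all trigonometric arguments are $0$ or $\pi$) and then evaluate the determinant directly; the row combinations $R_1+R_3$ and $R_2+R_4$ you suggest do indeed reduce the paper's matrix to a sparse form yielding $-4A^6$.
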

\begin{proof}
    By using the definition of the Pierrehumbert flow~\eqref{e:udef}, we explicitly compute
    \begin{equation}
        D_\zeta \Phi_4^{(2)}(z_*, \xi_*^2) =
        \begin{pmatrix}
           -A^3-A & -A^2 & -A & 0\\
           -A^4-2A^2 & -A^3-A & -A^2 & -A\\
           A^3+A & -A^2 & A & 0\\
           -A^4-2A^2 & A^3+A & -A^2 & A 
        \end{pmatrix}\,,
    \end{equation}
    and its determinant
    \begin{equation}
        \det D_\zeta \Phi_4^{(2)}(z_*, \xi_*^2) = -4A^6\,.
    \end{equation} 
\end{proof}

\bibliographystyle{halpha-abbrv}
\bibliography{gautam-refs1,gautam-refs2,preprints}
\end{document}